\newcommand\shorttitle{Characteristic Polynomials of Orthogonal and Symplectic Random Matrices, Jacobi Ensembles \& L-functions}
\newcommand\authors{M. A. Gunes}
\ifodd\value{page}
\authors
\shorttitle
\newtheorem{thm}{Theorem}[section]
\newtheorem{defn}[thm]{Definition}
\newtheorem{rmk}[thm]{Remark}
\newtheorem{prop}[thm]{Proposition}
\newtheorem{conjecture}[thm]{Conjecture}
\newtheorem*{claim*}{Claim}
\title{\large \bf Characteristic Polynomials of Orthogonal and Symplectic Random Matrices, Jacobi Ensembles \& L-functions}
\author{\small MUSTAFA ALPER GUNES}
\date{}
\begin{document}

\maketitle
\begin{abstract}
Starting from Montgomery's conjecture, there has been a substantial interest on the connections of random matrix theory and the theory of L-functions. In particular, moments of characteristic polynomials of random matrices have been considered in various works to estimate the asymptotics of  moments of L-function families. In this paper, we first consider joint moments of the characteristic polynomial of a symplectic random matrix and its second derivative. We obtain the asymptotics, along with a representation of the leading order coefficient in terms of the solution of a Painlevé equation. This gives us the conjectural asymptotics of the corresponding joint moments over families of Dirichlet L-functions. In doing so, we compute the asymptotics of a certain additive Jacobi statistic, which could be of independent interest in random matrix theory. Finally, we consider a slightly different type of joint moment that is the analogue of an average considered over $U(N)$ in various works before. We obtain the asymptotics and the leading order coefficient explicitly.
\end{abstract}

\tableofcontents
\section{Motivation}
Let $U$ be a matrix in $SO(2N)$ or $Sp(2N)$ sampled according to the Haar measure with eigenvalues $e^{\pm i\theta_1},\ldots, e^{\pm i\theta_N}$ and define the functions,
\begin{equation*}
   \psi_U (\theta) =\prod_{j=1}^N \left(1-e^{-i\left(\theta-\theta_j\right)}\right) \left(1-e^{-i\left(\theta+\theta_j\right)}\right), 
\end{equation*}
and
\begin{equation*}
    \mathcal{Z}_U (\theta) =  e^{\frac{iN}{2}(\theta+\pi)-i\sum_{k=1}^N\frac{\theta_k}{2}} \prod_{j=1}^N \left(1-e^{-i\left(\theta-\theta_j\right)}\right).
\end{equation*}
Note that $\psi_U$ is the characteristic polynomial of $U$, whereas $\mathcal{Z}_U$ is the so-called $\mathcal{Z}$-function corresponding to the eigenvalues on the upper part of the unit circle. In this paper, we will compute the $N\to \infty$ asymptotics of the corresponding quantities:
\begin{equation}
    \mathfrak{R}_N^G(s,h) =\mathbb{E}_{N}\left[\left| \mathcal{Z}_U (0)\right|^{2s} \left|\frac{d}{d\theta}\log \mathcal{Z}_U(\theta)\bigg\rvert_{\theta=0}\right|^{h}\right],
    \label{eq:average1}
\end{equation}
and
\begin{equation}
    \mathfrak{L}_N^G(s,h)=\mathbb{E}_N \left[\left|\psi_U(0)\right|^{2s-h} \left|\psi''_U(0)\right|^h\right],
    \label{eq:average2}
\end{equation}
where $G$ is one of the matrix groups $SO(2N)$ or $Sp(2N)$, and the averages are taken with respect to the Haar measures on each group.\\
While $\mathfrak{R}_N^G(s,h)$ can be seen as an analogue (notwithstanding certain differences) of a quantity that has been considered as an average over $U(N)$ in various works before due to its connections with the Riemann $\zeta$-function, (see \cite{Winn}, \cite{7authors}, \cite{abgs}, \cite{assiotis2020joint}, \cite{Bailey_2019}, \cite{Hughes}), results obtained in this paper regarding the asymptotics of $\mathfrak{L}_N^G(s,h)$ give rise to conjectures about moments of families of L-functions.
\subsection{Random Matrix Theory \& L-functions}
Starting from a conjecture due to Montgomery \cite{Montgomery}, there has been a substantial amount of work done on the connections between random matrix theory and analytic number theory. Montgomery's conjecture, which was based on both rigorous calculations and certain conjectures on twin primes, stated that the pair correlation function of the zeros of the Riemann Zeta function, high up the critical line, could be modeled via the pair correlation function of eigenangles of a random unitary matrix of large size. Later, it was proved by Keating and Snaith \cite{KeatingSnaith}, that the value distribution of $\zeta(s)$ coincided with the distribution of the characteristic polynomial of a random unitary matrix in the same limit. In \cite{KatzSarnak}, as an attempt to generalize this connection to other classes of L-functions, Katz and Sarnak  considered various statistics related to Haar-distributed random matrices from classical compact groups, $U(N)$, $Sp(2N)$ and $SO(2N)$. They showed that in the $N\to \infty$ limit, certain statistics related to consecutive spacings of eigenangles all converge to the same limit, that of the Circular Unitary Ensemble (CUE). On the other hand, they realized that this universality does not hold if one considers the distribution of the $k$-th eigenvalue over these compact groups. Namely, let $G$ be one of the groups, $U(N), Sp(2N)$ or $O(N)$ and let the eigenvalues of a matrix in these groups be ordered around the unit circle (starting from $1$ and going anti-clockwise). It is proved in \cite{KatzSarnak} that the averages
 \begin{equation*}
           \nu_{k} (G) [\alpha,\eta]:= \mu_{\textbf{Haar}} \left\{ A \in G: \alpha \leq \frac{\theta_k (A) N}{2 \pi} \leq \eta\right\},
       \end{equation*}
as $N\to \infty$, converge to different quantities corresponding to the symmetry type of the matrix group.\\
To translate this to the language of L-functions, consider a family of L-functions $\mathcal{L}$ where the L-functions are ordered by their conductors $c(L)$ (as we will see later on, they may be ordered by various other types of quantities, for instance cusp form L-functions are ordered by their weight).\\
If one assumes the Riemann Hypothesis for these L-functions, then all the zeros lie on the critical line (we will assume that the critical line is always $\Re(s)=\frac{1}{2}$) and the ones with non-negative imaginary part can be written as:
\begin{equation*}
    \left\{\frac{1}{2}+ i\gamma_n\right\}_{n=1}^\infty,
\end{equation*}
where $\gamma_n$'s are in increasing order.
Katz and Sarnak provide strong evidence that analogously to the random matrix theory average, as $N\to \infty$, the quantity
 \begin{equation*}
           \frac{ \left| \left\{L\in \mathcal{L}, c(L) \leq N  : \alpha \leq \frac{\gamma_{L,j} \log c(L)}{2\pi}\leq \eta\right\}\right|}{\left|
            \left\{L \in \mathcal{L}, c(L) \leq N\right\}\right|},
        \end{equation*}
converges where $\gamma_{L,j}$ denotes the $j$-th zero on the line $\Re(s)=\frac{1}{2}$. The factor $\log c(L)$ appears here due to the density of the zeros. The limiting quantity, as they conjecture, depends on the \textit{symmetry type} of the L-function family, which, in most cases, is difficult to determine.  \\
Due to the strong evidence that the zero statistics near $s=\frac{1}{2}$ over families of L-functions can be modeled by one of the compact random matrix groups, it becomes natural to wonder whether one can obtain conjectural asymptotics for a sum of the form
\begin{equation*}
    \frac{1}{|\mathcal{L}_N|} \sum_{L \in \mathcal{L}_N} L\left(\frac{1}{2}\right)^m,
\end{equation*}
as $N\to \infty$, where $\mathcal{L}$ is an L-function family ordered by some quantity $N$, and $\mathcal{L}_N$ is the subset of the family determined by $N$. Conrey and Farmer \cite{ConreyFarmer} provide evidence that asymptotically these averages are also determined by the symmetry type of the family. Hence, one would expect that these asymptotics agree with asymptotics of moments of characteristic polynomials of random matrices from the corresponding matrix group. Keating and Snaith give conjectures in this direction in \cite{keatingsnaith2}. Thus, we expect that the asymptotics we will compute give rise to conjectures regarding families of orthogonal and symplectic L-functions. Here note that we consider averages over $SO(2N)$ since L-functions with orthogonal symmetry either exhibit odd or even symmetry and in the odd case, the function vanishes at $s=\frac{1}{2}$ whereas for even symmetry we expect that the statistics related to the zeros near $s=\frac{1}{2}$ exhibit behaviour similar to eigenangles of $SO(2N)$ (see \cite{KatzSarnak} for any details).
\subsection{Main Results}
For technical reasons that appear in the proofs, we are able to prove our main results for a restricted parameter range (which still covers most of the range for which these moments exist). Thus, for ease of notation, if $x\in \mathbb{R}$, we will let $\alpha(x)$ be the greatest integer strictly less than $x$. In addition, throughout the rest of the paper, we will let $G(\cdot)$ denote the Barnes $G$-function. We are finally in position to state our main results. 
\begin{thm}
Let $\mathfrak{L}_N^{Sp}(s,h)$ be defined as before. For $\alpha\left(s+\frac{3}{2}\right)>h\geq 0$, we have the convergence
\begin{equation*}
    \lim_{N \to \infty} \frac{\mathfrak{L}_N^{Sp}(s,h)}{N^{\frac{s(s+1)}{2} + 2h} } =  \frac{2^{\frac{s^2}{2}}G(1+s)\sqrt{\Gamma(1+s)}}{\sqrt{G(1+2s)}\Gamma(1+2s)}\mathbb{E}\left[\left(\frac{1}{2}M\left(s+\frac{1}{2},\frac{1}{2}\right)+1\right)^h\right].
\end{equation*}
where for $a\geq b$, $a,b>0$, $M(a,b)$ is a non-trivial random variable whose distribution is given by \eqref{eq:limitingrvapprox}. Furthermore, the $\tau$-function corresponding to $M(a,b)$, which is defined as:
\begin{equation*}
      \tau_{a,b}(t):= t \frac{d}{dt} \log \mathbb{E}\left[e^{-tM(a,b)}\right],
\end{equation*}
solves the following form of the $\sigma$-Painlevé-III system:
\begin{equation*}
    \left(t \frac{d^2 \tau_{a,b} (t)}{dt^2}\right)^2+4 \left(\frac{d \tau_{a,b}}{dt}\right)^2 \left(t \frac{d\tau_{a,b}}{dt}-\tau_{a,b}(t)\right)-\left(a \frac{d\tau_{a,b}}{dt}+1\right)^2=0,
\end{equation*}
for all $t>0$, with boundary conditions: 
\begin{align}
\begin{cases}
    \tau_{a,b}(0) = 0, & \text{for } \ a>0, \label{eq:xiInfbc1}\\ & \\
    \left.\frac{d}{dt} \tau_{a,b}(t)\right|_{t=0} = -\frac{1}{4a}, & \text{for}\ a>1.
\end{cases}
\end{align}
\label{mainthm2}
\end{thm}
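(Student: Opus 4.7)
The plan is to reduce $\mathfrak{L}_N^{Sp}(s,h)$ to an average over a Jacobi unitary ensemble, then exploit hard-edge asymptotics together with the Bessel-kernel/Painlevé-III connection. First I would compute the logarithmic second derivative of $\psi_U$ at the origin. Writing $\psi_U(\theta) = (-2)^N e^{-iN\theta}\prod_j(\cos\theta_j - \cos\theta)$ and using $\psi''_U/\psi_U = (\psi'_U/\psi_U)^2 + (\psi'_U/\psi_U)'$, a direct calculation at $\theta=0$ yields
$$\frac{\psi''_U(0)}{\psi_U(0)} = -\Bigl(N^2 + \tfrac{1}{2}\sum_{j=1}^N \csc^2(\theta_j/2)\Bigr),$$
which is real and negative. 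Hence $|\psi''_U(0)|^h = |\psi_U(0)|^h \bigl(N^2 + \tfrac12\sum_j \csc^2(\theta_j/2)\bigr)^h$, giving
$$\mathfrak{L}_N^{Sp}(s,h) = \mathbb{E}^{Sp(2N)}\!\Bigl[|\psi_U(0)|^{2s}\bigl(N^2 + \tfrac12\sum_j \csc^2(\theta_j/2)\bigr)^h\Bigr].$$

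Next, via the substitution $y_j = \sin^2(\theta_j/2)$, the Weyl measure on $Sp(2N)$ becomes the Jacobi unitary ensemble on $[0,1]^N$ with weight $y^{1/2}(1-y)^{1/2}$, and $|\psi_U(0)|^{2s} = 4^{2Ns}\prod_j y_j^{2s}$. Absorbing the $\prod y_j^{2s}$ factor into the weight produces a Jacobi ensemble with parameters $(2s+\tfrac12, \tfrac12)$ at the cost of a partition-function ratio that, by setting $h=0$, equals $\mathbb{E}^{Sp}[|\psi_U(0)|^{2s}]$, whose Keating--Snaith asymptotic $\sim N^{s(s+1)/2} \cdot \frac{2^{s^2/2} G(1+s)\sqrt{\Gamma(1+s)}}{\sqrt{G(1+2s)}\Gamma(1+2s)}$ matches the prefactor in the theorem. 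Thus
$$\mathfrak{L}_N^{Sp}(s,h) = \mathbb{E}^{Sp}[|\psi_U(0)|^{2s}]\cdot \mathbb{E}^{J}\!\Bigl[\bigl(N^2 + \tfrac{1}{2}\sum_j 1/y_j\bigr)^h\Bigr],$$
where $\mathbb{E}^J$ denotes the reweighted Jacobi average.

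The additive statistic $\sum_j 1/y_j$ is dominated by the smallest Jacobi eigenvalues, which under the hard-edge scaling $y_j = \xi_j/(4N^2)$ converge to a Bessel point process. Accordingly $N^{-2}\sum_j 1/y_j$ converges in distribution; after fixing the normalization consistent with the parameterization of $M(a,b)$, this limit equals $\tfrac{1}{2}M(s+\tfrac12,\tfrac12)$, producing the claimed expression $\mathbb{E}\bigl[\bigl(\tfrac12 M(s+\tfrac12,\tfrac12)+1\bigr)^h\bigr]$ once the $h$-th moment can be passed through. For the Painlevé structure, the Laplace transform $\mathbb{E}[e^{-tM(a,b)}]$ is the hard-edge limit of an exponential linear statistic $\mathbb{E}^J[\exp(-\tfrac{t}{2N^2}\sum_j 1/y_j)]$, a Hankel determinant that converges to a Fredholm determinant of the Bessel kernel with a multiplicative perturbation. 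Results of Tracy--Widom and Forrester--Witte identify such Bessel Fredholm determinants with the $\sigma$-form of Painlevé III; translating parameters yields the stated ODE for $\tau_{a,b}$. The initial condition $\tau_{a,b}(0)=0$ is immediate from $\mathbb{E}[e^0]=1$, and $\tau'_{a,b}(0) = -1/(4a)$ for $a>1$ follows by computing $\mathbb{E}[M(a,b)]$ from the one-point density of the Bessel point process, the integral converging precisely when $a>1$.

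The main technical obstacle is controlling uniform integrability of $N^{-2}\sum_j 1/y_j$, since weak convergence alone does not yield convergence of $h$-th moments. Sharp lower-tail estimates for $y_{\min}$ uniform in $N$ are available only for $h$ below the threshold $\alpha(s+\tfrac32)$ coming from integrability of the Bessel one-point function near the origin, which accounts for the parameter restriction in the theorem. A secondary difficulty is identifying the specific $\sigma$-Painlevé-III form, among several equivalent versions, and verifying its normalization; this requires careful bookkeeping of parameters through both the Jacobi reweighting and the hard-edge limit.
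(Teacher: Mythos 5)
Your reduction to the Jacobi ensemble matches the paper's: the identity $\psi_U(\theta)=(-2)^Ne^{-iN\theta}\prod_j(\cos\theta_j-\cos\theta)$ and the resulting formula $\psi_U''(0)/\psi_U(0)=-\bigl(N^2+\tfrac12\sum_j\csc^2(\theta_j/2)\bigr)$ agree (after the rewrite $\csc^2(\theta_j/2)=2/(1-\cos\theta_j)$) with the paper's $\sum_j\tfrac{\cos\theta_j-2}{1-\cos\theta_j}-N(N-1)$, and the substitution $x_j=\sin^2(\theta_j/2)$ is the same.

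The serious gap is in how you treat the limiting random variable. You assert that, because the correlation kernel converges under the hard-edge scaling $y_j=\xi_j/(4N^2)$ to the Bessel kernel, the additive statistic $N^{-2}\sum_j 1/y_j$ converges in distribution to the inverse-sum over the Bessel point process, and that the Laplace transform converges to a Bessel Fredholm determinant whose Painlev\'e-III characterization is known. That is precisely what the paper says it \emph{cannot} prove (see Remark 1.2, where the identification with the Bessel statistic $Y(a)$ is stated as a belief, not a theorem). Weak convergence of the rescaled point process near the hard edge does not by itself control a global, unbounded statistic: the bulk eigenvalues with $y_j\gg N^{-2}$ also contribute to $\sum_j 1/y_j$, and convergence of the perturbed Hankel determinant to the Fredholm determinant would require trace-class convergence which is not off-the-shelf. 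The paper sidesteps this entirely: it proves tightness of $M_N$ and then invokes the Riemann--Hilbert analysis of Chen--Feigin (ref.\ \cite{chen} in the paper), which proves pointwise convergence of $f_N(t)=\mathbb{E}_N^{(a,b)}[e^{-tM_N}]$ on $(0,\infty)$. Montel's theorem then upgrades this to locally uniform convergence of holomorphic functions on the right half-plane, which combined with tightness forces the full sequence to converge in distribution. The Painlev\'e equation is likewise obtained by passing to the limit in the finite-$N$ $\sigma$-Painlev\'e-type ODE satisfied by $f_N$ (again from \cite{chen}), not from a Tracy--Widom-style Fredholm identity. So your argument substitutes an unproved identification for the paper's actual analytic input, and the part of the proof you wave at (``the limit equals $\tfrac12 M(s+\tfrac12,\tfrac12)$'') is the very thing that needs to be established.

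Two smaller remarks. First, your uniform-integrability sketch invokes lower-tail estimates on $y_{\min}$; the paper instead bounds $\mathbb{E}_N^{(a,b)}[M_N^{\alpha(a+1)}]$ directly by expanding the multinomial, bounding each term with a Hadamard/Cauchy--Schwarz inequality on the kernel determinant, and then using the Szeg\H{o} bound $\sup_x P_j^2(x)\le Cj^{2a+1}$ near $0$ and the Bernstein-type bound $w(x)P_j^2(x)\le C/\sqrt{x(1-x)}$ away from it. Your heuristic points in the right direction (the threshold comes from integrability of the one-point function near $0$) but is not the same argument. Second, your Jacobi reweighting lands on parameters $(2s+\tfrac12,\tfrac12)$, whereas the theorem (and the paper's equation reducing $\mathfrak{L}_N^{Sp}$) uses $(s+\tfrac12,\tfrac12)$, and you never reconcile this: the random variable in your conclusion would be $M(2s+\tfrac12,\tfrac12)$, not $M(s+\tfrac12,\tfrac12)$. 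The discrepancy traces to a factor-of-two in the exponent of $|\psi_U(0)|$ (the Keating--Snaith asymptotic $N^{s(s+1)/2}$ that you quote corresponds to $\mathbb{E}[\psi_U(0)^s]$, not $\mathbb{E}[\psi_U(0)^{2s}]$); you should have flagged the mismatch with the stated theorem rather than asserting agreement.
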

\begin{rmk}
We believe that the random variable $M(a,b)$ in fact only depends on $a$, and is given by the sum of inverse points of the Bessel point process with parameter $a$ (which we will denote by $Y(a)$). This would not be very surprising since the correlation kernel of the Jacobi ensemble converges to the Bessel correlation kernel under the $\frac{1}{N^2}$ scaling, and the Laplace transform of $Y(a)$ is also shown to satisfy the Painlevé system given above (see \cite{abgs}). Finally, we also note that this random variable appears in the ergodic decomposition of inverse Wishart measures (see \cite{assiotis}).
\end{rmk}
At the centre of our proofs will be the Jacobi ensemble, which we define now.
\begin{defn}
Let $a,b>-1$. Define the probability measure, supported on $[0,1]^N$:
\begin{equation}
    \mu_{N}^{(a,b)} (d\mathbf{x}) = \prod_{j=0}^{N-1} \frac{\Gamma(a+b+N+j+1)}{\Gamma(a+j+1)\Gamma(b+j+1)\Gamma(j+2)} \prod_{j=1}^N x_j^a \left(1-x_j\right)^b \prod_{1\leq j<k\leq N} \left(x_k-x_j\right)^2 d\mathbf{x}.
    \label{eq:jacobidef}
\end{equation}
This is called the Jacobi ensemble corresponding to parameters $a,b$. Similarly, for a measurable function $f:\mathbb{R}^N\to \mathbb{R}$, we denote
\begin{equation*}
    \mathbb{E}_N^{(a,b)}[f(x_1,\ldots,x_N)]:= \int_{\mathbb{R}^N} f(\mathbf{x}) \mu_N^{(a,b)}(d\mathbf{x}).
\end{equation*}
\end{defn}
In section 2, we will start by writing $\mathfrak{L}_N^{Sp}$ in terms of the moments of a linear Jacobi statistic. We will show that all derivatives of the Laplace transform of this statistic converge to those of a limiting random variable. In essence, the transformation to the Jacobi ensemble should work for $SO(2N)$ as well, but the Riemann-Hilbert methods used in \cite{chen} require an extra restriction on the parameters of the Jacobi ensemble. That is, it is crucial for the arguments in this paper that the Laplace transform of $\{M_N\}_{N\geq 1}$ converges pointwise to some limiting function. However, for the Jacobi parameters that correspond to $SO(2N)$, this does not follow from the results of \cite{chen}. Finally, we also note that the analogous linear statistic corresponding to the Laguerre unitary ensemble has been studied in various works before (see \cite{ref2_1}), \cite{ref2_2}, \cite{ref2_3}). \\
\indent This result, following the discussion in the previous section, gives rise to the following conjecture about joint moments of Dirichlet L-functions. These type of joint moments have been studied in the past both in random matrix theory and number theory settings (see, for instance, \cite{Hughes}, \cite{Mourtada}). Here we denote by $L(s,\chi_d)$, the L-function corresponding to a Dirichlet character modulo $d$, $\chi_d$.
\begin{conjecture}
Let $s+\frac{1}{2}>h \geq 0$. Then, as $N\to \infty$,
\begin{multline*}
  \frac{1}{N^*} \sum \left|L\left(\frac{1}{2},\chi_d\right)\right|^{2s-h} \left|L'' \left(\frac{1}{2},\chi_d\right)\right|^{h} \sim \log^{s(s+1)/2+2h} (N^{1/2})\\a_s \frac{2^{\frac{s^2}{2}}G(1+s)\sqrt{\Gamma(1+s)}}{\sqrt{G(1+2s)}\Gamma(1+2s)}\mathbb{E}\left[\left(\frac{1}{2}M\left(s+\frac{1}{2},\frac{1}{2}\right)+1\right)^h\right],
\end{multline*}
where for the sum is taken over all quadratic Dirichlet characters with modulus less than or equal to $N$, $N^*$ is the number of such characters and 
\begin{equation*}
    a_s = \prod_{p \text{ prime }} \frac{\left(1-\frac{1}{p}\right)^{s(s+1)/2}}{1+\frac{1}{p}}\left(\frac{1}{p}+\frac{1}{2}\left(\left(1+\frac{1}{\sqrt{p}}\right)^{-s}+\left(1-\frac{1}{\sqrt{p}}\right)^{-s}\right)\right).
\end{equation*}
\end{conjecture}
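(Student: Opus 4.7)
Because the statement is a conjecture, the plan is not to give a rigorous proof but to derive it heuristically via the standard Keating--Snaith / Conrey--Farmer--Keating--Rubinstein--Snaith recipe, combining the random matrix result of Theorem~\ref{mainthm2} with the appropriate arithmetic factor. Following the Katz--Sarnak philosophy reviewed in Section~1.1, the family $\{L(\cdot,\chi_d)\}$ of quadratic Dirichlet $L$-functions has symplectic symmetry, so I would model the values of $L(1/2,\chi_d)$ and its derivatives by the corresponding values of $\psi_U$ and its derivatives for a Haar-distributed $U\in Sp(2N)$. Matching the density of low-lying zeros near $s=1/2$ forces $2N\sim\log d$, and for $|d|\le N$ this produces the dictionary $N_{\mathrm{mat}}\leftrightarrow \log(N^{1/2})$ that converts random matrix $N_{\mathrm{mat}}$-asymptotics into the $\log$-asymptotics of the conjecture.

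Under this correspondence the random matrix analogue of the sum on the left-hand side is precisely $\mathfrak{L}^{Sp}_{N_{\mathrm{mat}}}(s,h)$. Applying Theorem~\ref{mainthm2} and substituting $N_{\mathrm{mat}}\mapsto\log(N^{1/2})$ produces the predicted growth rate $\log^{s(s+1)/2+2h}(N^{1/2})$ together with the explicit leading constant
\[
\frac{2^{s^{2}/2}G(1+s)\sqrt{\Gamma(1+s)}}{\sqrt{G(1+2s)}\,\Gamma(1+2s)}\,\mathbb{E}\!\left[\left(\tfrac12 M\!\left(s+\tfrac12,\tfrac12\right)+1\right)^{h}\right].
\]
The extra factor $\log^{2h}(N^{1/2})$ relative to the pure $2s$-th moment is natural, since $L''(1/2,\chi_d)/L(1/2,\chi_d)$ sits on the scale $\log^{2}d$, precisely mirroring the $N_{\mathrm{mat}}^{2}$ that $\psi''_U(0)/\psi_U(0)$ contributes at the level of random matrices. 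This step uses Theorem~\ref{mainthm2} as a black box and is entirely mechanical.

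To recover the full conjecture I would then insert the arithmetic factor $a_s$ by running the CFKRS recipe: replace each $L(1/2,\chi_d)$ by its approximate functional equation, expand the resulting Dirichlet series into an Euler product, and average the arising character sums over $|d|\le N$. The orthogonality of quadratic characters collapses the off-diagonal terms, and the surviving diagonal $p$-local contributions assemble into exactly the Euler product $a_s$ of the statement, in the same manner as in the pure-moment conjecture of Keating--Snaith~\cite{keatingsnaith2}. The introduction of $|L''|^h$ does not perturb $a_s$: the derivatives act on the global $\log$-scale of the main term (producing the $\log^{2h}(N^{1/2})$ already accounted for above) while leaving the small-prime local factors, which are entirely responsible for $a_s$, unchanged to leading order. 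Multiplying $a_s$ by the random matrix coefficient of Theorem~\ref{mainthm2} then gives the right-hand side of the conjecture.

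The main obstacle to turning this heuristic into an actual theorem lies entirely on the number-theoretic side: even for $h=0$, the Keating--Snaith prediction for $\frac{1}{N^{*}}\sum|L(1/2,\chi_d)|^{2s}$ is known only for small integer $s$, and accessing non-integer joint moments involving $|L''(1/2,\chi_d)|^{h}$ is well beyond current techniques in analytic number theory. Consequently the plan above should be read as a derivation of the conjecture rather than a proof; rigour enters only through the random matrix step, which is supplied by Theorem~\ref{mainthm2}.
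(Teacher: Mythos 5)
The proposal correctly recognizes that this is a conjecture obtained by combining Theorem~\ref{mainthm2} with the standard Katz--Sarnak/Keating--Snaith recipe (symplectic symmetry type, the $N_{\mathrm{mat}}\leftrightarrow\log$ dictionary, and insertion of the Euler-product arithmetic factor $a_s$), which is exactly the heuristic the paper invokes when it presents the conjecture as "gives rise to" from Theorem~\ref{mainthm2} and the discussion in Section~1.1. Your elaboration of that recipe is accurate and matches the paper's implicit derivation; no genuine gap.
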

Using a similar method, that is switching to the Jacobi ensemble, one can also obtain the asymptotics of $\mathfrak{R}_N(s,h)$, which requires a few extra technical results that are proved using the theory of orthogonal polynomials. This is the content of our second main result. Here, $G$ denotes the Barnes G-function.
\begin{thm}
For $s>\frac{1}{2}$, $\alpha\left(2s+1\right)>h\geq 0$,
\begin{equation*}
    \lim_{N \to \infty} \frac{\mathfrak{R}_N^{SO}(s,h)}{N^{\frac{s(s-1)}{2} + h} \log^h (N)} = \frac{1}{\pi^h} \frac{2^{\frac{s^2}{2}}G(1+s)\sqrt{\Gamma(1+2s)}}{\sqrt{G(1+2s)}\Gamma(1+s)},
\end{equation*}
and for $s>\frac{3}{2}$, $\alpha(2s+3)>h\geq 0$,
\begin{equation*}
    \lim_{N \to \infty} \frac{\mathfrak{R}_N^{Sp}(s,h)}{N^{\frac{s(s+1)}{2} + h} \log^h (N)} = \frac{1}{\pi^h} \frac{2^{\frac{s^2}{2}}G(1+s)\sqrt{\Gamma(1+s)}}{\sqrt{G(1+2s)}\Gamma(1+2s)}.
\end{equation*}
\label{mainthm}
\end{thm}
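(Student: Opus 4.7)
Following the strategy used for Theorem~\ref{mainthm2}, my first step is to express $\mathfrak{R}_N^G(s,h)$ in Jacobi coordinates. A direct eigenangle calculation gives $|\mathcal{Z}_U(0)|^2 = \prod_{j=1}^N(2-2\cos\theta_j)$ and $\left.\frac{d}{d\theta}\log\mathcal{Z}_U(\theta)\right|_{\theta=0} = -\tfrac12 \sum_{j=1}^N \cot(\theta_j/2)$. The substitution $x_j = \sin^2(\theta_j/2)$ turns $(2-2\cos\theta_j)^s$ into $4^s x_j^s$, turns $\cot(\theta_j/2)$ into $\sqrt{(1-x_j)/x_j}$, and maps the eigenvalue densities of $SO(2N)$ and $Sp(2N)$ onto the Jacobi measures $\mu_N^{(-1/2,-1/2)}$ and $\mu_N^{(1/2,1/2)}$ respectively. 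Absorbing $\prod_j (4x_j)^s$ into the weight shifts $a \mapsto a+s$, yielding
\begin{equation*}
\mathfrak{R}_N^G(s,h) \;=\; \frac{C_N^G(s)}{2^h}\, \mathbb{E}_N^{(a+s,b)}\!\bigl[L_N^h\bigr], \qquad L_N \;:=\; \sum_{j=1}^N \sqrt{\tfrac{1-x_j}{x_j}},
\end{equation*}
where $(a,b) = (-\tfrac12,-\tfrac12)$ or $(\tfrac12,\tfrac12)$ according to $G$, and $C_N^G(s)$ is an explicit ratio of normalising constants built from \eqref{eq:jacobidef}. (Since $L_N\geq 0$, we have $|L_N|^h = L_N^h$.)

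\textbf{Prefactor and moments of the linear statistic.} The Barnes identity $\prod_{j=0}^{N-1}\Gamma(z+j+1) = G(z+N+1)/G(z+1)$ rewrites $C_N^G(s)$ as a ratio of four Barnes $G$-values; feeding in the standard asymptotic of $\log G$ then produces $C_N^G(s) \sim \kappa_G(s)\,N^{\alpha_G(s)}$ with $\alpha_{SO}(s) = s(s-1)/2$, $\alpha_{Sp}(s) = s(s+1)/2$, and $\kappa_G(s)$ the explicit combination of $G$- and $\Gamma$-values stated in the theorem. What then remains is to show
\begin{equation*}
\mathbb{E}_N^{(a+s,b)}\!\bigl[L_N^h\bigr] \;=\; \Bigl(\tfrac{2N\log N}{\pi}\Bigr)^{\!h}\!\bigl(1+o(1)\bigr),
\end{equation*}
so that together with $2^{-h}$ one recovers the $\pi^{-h}\log^h N$ factor in the theorem. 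I would compute the mean via $\mathbb{E}[L_N] = \int_0^1 \sqrt{(1-x)/x}\,K_N(x,x)\,dx$ with $K_N$ the Christoffel--Darboux kernel of the shifted Jacobi ensemble: using the bulk asymptotic $K_N(x,x)\sim N/(\pi\sqrt{x(1-x)})$ reduces the integrand to $\sim N/(\pi x)$, and cutting off the resulting logarithmic divergence at the hard-edge scale $x\sim c/N^2$ (where the Bessel point process takes over) gives $\mathbb{E}[L_N]\sim 2N\log N/\pi$. To promote the mean asymptotic to the $h$-th moment I would bound $\mathrm{Var}(L_N) = o(N^2\log^2 N)$ using the determinantal identity $\mathrm{Var}(L_N) = \int f(x)^2 K_N(x,x)\,dx - \int\!\!\int f(x)f(y)K_N(x,y)^2\,dx\,dy$, and iterate through the higher cumulants to conclude $\mathbb{E}[L_N^h] \sim (\mathbb{E} L_N)^h$ for each integer $h$ in the allowed range.

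\textbf{Main obstacle.} The technical heart of the argument is the simultaneous singularity at $x=0$ of the test function $\sqrt{(1-x)/x}$, the Jacobi weight $x^{a+s}$, and the correlation kernel $K_N$, whose joint behaviour is governed by the Bessel process in the $x\sim\xi/N^2$ regime. Making the bulk/edge matching rigorous therefore demands uniform Plancherel--Rotach-type asymptotics for the Jacobi orthonormal polynomials $P_N^{(a+s,b)}$ valid throughout the bulk, at the hard edge, and in the intermediate transition regime; these are the ``extra technical results proved using the theory of orthogonal polynomials'' that the excerpt alludes to. The parameter restrictions $s>\tfrac12,\ \alpha(2s+1)>h$ for $SO$ and $s>\tfrac32,\ \alpha(2s+3)>h$ for $Sp$ are exactly the conditions under which the relevant $h$-th moment integrands $x^{a+s-h/2}(1-x)^b$ are integrable near $x=0$ and under which the Riemann--Hilbert input of \cite{chen} is applicable.
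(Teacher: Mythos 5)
Your reduction to the Jacobi ensemble, your identification of $(a,b)$, and your target $\mathbb{E}_N^{(a,b)}[(\tfrac{1}{N\log N}L_N)^h]\to(2/\pi)^h$ all match the paper exactly; so does your recognition that the technical heart lies near the hard edge $x\sim N^{-2}$. Two of your steps are, however, more like placeholders than proofs, and in one of them the paper's route is genuinely different.

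First, the mean asymptotic. Your ``bulk density $K_N(x,x)\sim N/(\pi\sqrt{x(1-x)})$, cut off at $x\sim c/N^2$'' is a heuristic: the bulk formula fails uniformly as $x\to0$, and matching it to the Bessel hard-edge regime is exactly what has to be justified. The paper's Proposition \ref{prop1} does this not by bulk/edge matching but via the Frenzen--Wong uniform asymptotic \eqref{eq:wongest}, which replaces $\sqrt{w^{(a,b)}(x)}P_l^{(a,b)}(x)$ by a Bessel function with an error term that is $O(l^{-1})$ \emph{uniformly on a fixed interval} $[0,\epsilon]$. This converts $\mathcal{S}_N=\int_0^1\sqrt{(1-x)/x}\,K_N(x,x)\,dx$ into a sum of explicit Bessel integrals $F(l)$, whose asymptotics $\int_1^{l'}J_a^2(u)\,du\sim\pi^{-1}\log l$ give $\mathcal{S}_N\sim\frac{2}{\pi}N\log N$ rigorously. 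Your ``uniform Plancherel--Rotach'' comment points in approximately the right direction, but the specific tool is the Bessel-type expansion, and without it Proposition \ref{prop1} does not follow.

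Second, and more seriously, ``iterate through the higher cumulants'' is not how the paper promotes the mean to the $h$-th moment, and it would not give the full theorem: the theorem asserts the limit for \emph{all real} $h\in[0,\alpha(2a+2))$, whereas a cumulant argument only addresses integers. The paper instead proves convergence in $L^2$, i.e.\ $\mathbb{E}[\mathcal{Z}_N]\to 2/\pi$ and $\mathbb{E}[\mathcal{Z}_N^2]\to 4/\pi^2$; this gives convergence in distribution to the constant $2/\pi$, then Skorokhod's representation gives a.s.\ convergence, and finally uniform integrability of $\{\mathcal{Z}_N^h\}$ is obtained by bounding $\sup_N\mathbb{E}[\mathcal{Z}_N^{\alpha(2a+2)}]$ — via the multinomial expansion \eqref{eq:combsum5}, the Hadamard-type bound $|\det K_N|\le l!\prod_i K_N(x_i,x_i)$, and the one-point integral estimates of Propositions \ref{prop1}, \ref{prop2}. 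Uniform integrability plus a.s.\ convergence delivers $L^h$-convergence for every real $h$ strictly below $\alpha(2a+2)$. You should replace the cumulant step by this argument.

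Finally, on the variance: you write down the correct determinantal formula $\mathrm{Var}(L_N)=\int f^2K_N(x,x)\,dx-\iint f(x)f(y)K_N(x,y)^2\,dx\,dy$ but do not say how to control it. The paper attacks the second (off-diagonal) term head-on in Proposition \ref{prop3}, via Frenzen--Wong again plus a $j/k$-dichotomy and oscillatory Bessel estimates — this is the single most technical step in the paper. It is worth noting that your route can actually sidestep that proposition: both terms in the variance identity are nonnegative, so $\mathrm{Var}(L_N)\le\int_0^1\frac{1-x}{x}K_N(x,x)\,dx$, which is $O(N^2)$ by the $\lambda=2$ case of \eqref{eq:momentsdct}; hence $\mathrm{Var}(L_N)/(\mathbb{E}L_N)^2=O(\log^{-2}N)\to0$ with no off-diagonal analysis whatsoever. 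If you spell out that one-line Cauchy--Schwarz observation, your variance step becomes both rigorous and strictly simpler than the paper's Proposition \ref{prop3}. As written, though, the step is unproved.
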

\textbf{Acknowledgements.} We would like to thank Prof. Jon Keating for many useful suggestions that lead to the improvement of both the results and their presentation. We would also like to thank anonymous referees both for their very helpful comments and suggestions that improved the presentation of the paper, and also for pointing out a number of important references.
\section{Proof of Theorem \ref{mainthm2}}

The relation between the characteristic polynomials of symplectic/orthogonal matrices and Jacobi ensemble is due to Weyl's integration formula. In particular, via a standard application of Weyl's formula one can compute (see for instance \cite{Forrester}) the eigenangle densities of these compact matrix groups:
\begin{center}
\begin{tabular}{c c c c} 
 \hline
 G & Eigenvalues & Joint pdf of $\theta$s \\ 
 \hline\hline

 \hline
 $SO(2N)$ & $e^{\pm i \theta_1}, \ldots, e^{\pm i \theta_N}$ & $ \prod\limits_{1 \leq k <j \leq N} \left|\cos\left(\theta_k\right)-\cos\left(\theta_j\right)\right|^2 d \bm{\theta}$   \\
 \hline
 $Sp(2N)$ & $e^{\pm i \theta_1}, \ldots, e^{\pm i \theta_N}$ & $ \prod\limits_{1 \leq k <j \leq N} \left|\cos\left(\theta_k\right)-\cos\left(\theta_j\right)\right|^2  \prod\limits_{k=1}^N \sin^2 (\theta_k)d \bm{\theta}$ \\
 
 \hline
\end{tabular}
\end{center}
Using the above table, one can see that the transformation $x_j=\frac{1}{2}(1-\cos \theta_j)$ transfers these measures to the corresponding Jacobi ensembles. \\
Now, focusing again on the average $\mathfrak{L}_N^{Sp}(s,h)$, we compute
\begin{equation*}
   \frac{d^2 \psi_U}{d\theta^2} = \sum_{j=1}^N \left\{\psi''_j(\theta) \prod_{\substack{l=1 \\ l \neq j}}^N \psi_j(\theta) + \psi'_j(\theta) \sum_{\substack{l=1 \\ l \neq j}}^N \left\{ \psi'_l(\theta) \prod_{\substack{k=1 \\ k \neq l,j}}^N \psi_k(\theta)\right\}\right\},
\end{equation*}
where
\begin{equation*}
    \psi_j(\theta)=(1-e^{i(\theta-\theta_j})(1-e^{-i(\theta-\theta_j)}).
\end{equation*}
Dividing by $\psi_U$ and evaluating at $\theta=0$, we obtain;
\begin{equation*}
    \frac{\psi''_U(0)}{\psi_U(0)}=\sum_{j=1}^N \frac{\psi''_j(0)}{\psi_j(0)} + \sum_{\substack{j,l=1 \\ j \neq l}}^N \frac{\psi'_j(0)\psi'_l(0)}{\psi_j(0)\psi_l(0)}=  \sum_{j=1}^N \frac{\cos(\theta_j)-2}{1-\cos(\theta_j)} - N(N-1).
\end{equation*}
In particular, applying the transformation $x_j=\frac{1}{2}\left(1-\cos(\theta_j)\right)$:
\begin{equation}
    \mathfrak{L}_N^{Sp}(s,h) = \mathfrak{L}_N^{Sp}(s,0) \mathbb{E}_{N}^{\left(s+\frac{1}{2},+\frac{1}{2}\right)} \left[\left(\sum_{j=1}^N \frac{1}{2x_j}+N^2\right)^h\right].
\end{equation}
The $N\to \infty$ asymptotics of $\mathfrak{L}_N^{Sp}(s,0)$ is already given in \cite{keatingsnaith2} by
\begin{equation}\label{eq:0thmomasymptoticssp}
    \mathfrak{L}_N^{Sp}(s,0)\sim N^{s(s+1)/2} \frac{2^{\frac{s^2}{2}}G(1+s)\sqrt{\Gamma(1+s)}}{\sqrt{G(1+2s)}\Gamma(1+2s)}.
\end{equation}
Hence, the following theorem, which is the main result of this section, gives the asymptotics of $\mathfrak{L}_N^{Sp}(s,h)$.
\begin{thm}

Let $a\geq b$, $a,b>0$. If $\left(x_1^{(N)},\ldots, x_N^{(N)}\right)$ denote points sampled according to $\mu_N^{\left(a,b\right)}\left(d\bm{x}\right)$, there exists a non-negative (and non-trivial) random variable $M(a,b)$ such that
\begin{equation}\label{eq:limitingrvapprox}
    M_N:= \sum_{j=1}^N \frac{1}{N^2x_j^{(N)}} \xrightarrow[N \to \infty]{d} M(a,b),
\end{equation}
and for all $h\in[0,\alpha\left(a+1)\right)$
\begin{equation*}
 \mathbb{E}_{N}^{\left(a,b\right)} \left[\left(\frac{M_N}{2}+1\right)^h\right] \xrightarrow[]{N \to \infty}  \mathbb{E} \left[\left(\frac{M(a,b)}{2}+1\right)^h\right].
\end{equation*}
Furthermore, the 'tau' function corresponding to the random variable $M(a,b)$,
\begin{equation*}
      \tau_{a,b}(t):= t \frac{d}{dt} \log \mathbb{E}\left[e^{-tM(a,b)}\right],
\end{equation*}
solves the following form of the $\sigma$-Painlevé-III equation:
\begin{equation*}
    \left(t \frac{d^2 \tau_{a,b} (t)}{dt^2}\right)^2+4 \left(\frac{d \tau_{a,b}}{dt}\right)^2 \left(t \frac{d\tau_{a,b}}{dt}-\tau_{a,b}(t)\right)-\left(a \frac{d\tau_{a,b}}{dt}+1\right)^2=0.
\end{equation*}
\end{thm}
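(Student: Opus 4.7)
The plan is to analyze the Laplace transform
\begin{equation*}
L_N(t) = \mathbb{E}_N^{(a,b)}\bigl[e^{-tM_N}\bigr] = \frac{1}{Z_N^{(a,b)}}\int_{[0,1]^N} \prod_{j=1}^N x_j^a(1-x_j)^b e^{-t/(N^2 x_j)} \prod_{j<k}(x_k-x_j)^2 \, d\mathbf{x},
\end{equation*}
which is a normalized Hankel determinant associated with the perturbed Jacobi weight $x^a(1-x)^b e^{-t/(N^2x)}$. The factor $N^{-2}$ is precisely the hard-edge scaling, so one expects $L_N(t)$ to converge to the Laplace functional of the Bessel point process with parameter $a$ evaluated at $f(x) = t/x$, which would then be taken as the definition of $L_\infty(t)$. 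The Deift-Zhou/Riemann-Hilbert steepest-descent analysis of this family of weights in \cite{chen} (whose applicability under the condition $a\geq b$ is exactly the source of the restriction in the theorem) produces this pointwise convergence $L_N(t)\to L_\infty(t)$ for each $t>0$.

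Given $L_\infty$, I would define $M(a,b)$ as the non-negative random variable with Laplace transform $L_\infty$. Non-triviality follows once one knows $L_\infty'(0) \neq 0$; this is seen from $\mathbb{E}[M(a,b)] = 1/(4a) > 0$, computed from the one-point density of the limiting Bessel ensemble. Convergence in distribution $M_N \Rightarrow M(a,b)$ is then a direct application of Curtiss's theorem for positive random variables. For the $\sigma$-Painlevé III identity, the finite-$N$ Hankel determinant with this weight has, via the ladder-operator framework of \cite{chen}, a tau function satisfying a Painlevé-type ODE; passing that equation to the $N\to\infty$ limit (or, alternatively, invoking the Painlevé structure of the Bessel-kernel Fredholm determinant established in \cite{abgs}) yields the stated equation. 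The boundary conditions are then read off from the small-$t$ expansion: $L_\infty(0)=1$ gives $\tau_{a,b}(0)=0$, and $\partial_t\tau_{a,b}(0)=-\mathbb{E}[M(a,b)]=-1/(4a)$.

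For the convergence of $\mathbb{E}_N^{(a,b)}[(M_N/2+1)^h]$ with $h\in[0,\alpha(a+1))$, the plan is to upgrade distributional convergence to moment convergence by uniform integrability. Setting $k:=\alpha(a+1)$, the key estimate is
\begin{equation*}
\sup_N \mathbb{E}_N^{(a,b)}\bigl[M_N^k\bigr] < \infty.
\end{equation*}
Expanding
\begin{equation*}
M_N^k = N^{-2k}\sum_{j_1,\ldots,j_k} \prod_{i=1}^k x_{j_i}^{-1}
\end{equation*}
and splitting by the multiplicities with which the indices repeat, this reduces to bounding correlation integrals $\mathbb{E}_N^{(a,b)}[\prod_i x_{j_i}^{-n_i}]$ with $\sum n_i = k$. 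The worst-case single-index contribution requires $\mathbb{E}[x_j^{-k}]<\infty$, which holds since $k<a+1$ gives integrability of $x^{a-k}$ at the origin. The mixed contributions are controlled either by Selberg-type integral identities or by comparison with the hard-edge Bessel scaling limit. With this uniform $L^k$ bound in place, the continuous function $x\mapsto (x/2+1)^h$ grows only polynomially of order $h<k$, so uniform integrability delivers the required convergence of expectations.

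The principal obstacle I anticipate is the uniform bound at the top of the range, i.e.\ when $k$ is just below $a+1$: multiple particles clustering near the hard edge $0$ could in principle destroy the estimate, and it must be verified that the Vandermonde repulsion $\prod(x_j-x_k)^2$ is strong enough to tame these configurations. The Painlevé identification and the distributional convergence, by contrast, follow more routinely from the Riemann-Hilbert asymptotics of \cite{chen}.
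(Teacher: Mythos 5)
Your plan follows the same skeleton as the paper's proof: cite \cite{chen} for pointwise convergence of the Laplace transforms $L_N(t)\to L_\infty(t)$, $t>0$; obtain uniform integrability of $M_N^h$ for $h<\alpha(a+1)$ from a uniform moment bound; and inherit the $\sigma$-Painlev\'e III equation by passing the finite-$N$ differential equation of \cite{chen} to the limit. However, two steps are smoothed over in a way that would not survive a careful write-up.

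First, ``Curtiss's theorem'' is formulated for moment generating functions finite in a neighbourhood of the origin; for Laplace transforms on $(0,\infty)$ one must additionally rule out escape of mass to infinity, i.e.\ either verify $L_\infty(0^+)=1$ or prove tightness of $\{M_N\}$ separately. The paper does the latter via Markov's inequality and a uniform first-moment bound, then extracts a weakly convergent subsequence and matches its Laplace transform with $L_\infty$. Your $\sup_N\mathbb{E}_N[M_N^{\alpha(a+1)}]<\infty$ bound would yield tightness as a byproduct, but you need to say so. Relatedly, ``passing the finite-$N$ Painlev\'e equation to the limit'' silently requires locally uniform convergence of $L_N$ and its first two derivatives, not merely pointwise convergence. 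The paper secures this by extending $L_N$ holomorphically to the right half-plane, noting $|L_N(z)|\le 1$ there, invoking Montel's theorem, and using the identity theorem to match limits of subsequences with $L_\infty$ on $(0,\infty)$; uniform-on-compacts convergence of holomorphic functions then yields convergence of derivatives. This step is absent from your outline. Second, the moment bound is the genuine technical content, and asserting finiteness of $\mathbb{E}[x^{-k}]$ from integrability of $x^{a-k}$ is insufficient: what is needed is the $N$-uniform rate $\kappa_N(\lambda)\mathbb{E}_N[\prod_j x_j^{-\lambda_j}]=O(N^{2h})$. The paper proves this by bounding $|K_N(x,y)|\le\sqrt{K_N(x,x)K_N(y,y)}$ to reduce the correlation integral to a product of one-dimensional integrals $\int_0^1 x^{-\lambda}K_N(x,x)\,dx$, rescaling $x=u/N^2$, and applying Szeg\H{o}'s bound $\sup_{[0,1]}P_j^2\le Cj^{2a+1}$ on $u\in(0,1)$ together with the Erd\'elyi-type bound $w(x)P_j^2(x)\le C/\sqrt{x(1-x)}$ on $u\in(1,N^2/2)$. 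Your appeal to ``Selberg-type identities or comparison with the hard-edge Bessel limit'' names a hope rather than an argument; the Vandermonde repulsion you correctly identify as the main obstacle is handled precisely by this kernel Cauchy--Schwarz manoeuvre.
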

To prove this result, we will first prove convergence in distribution, and then show that the $h$-th moments are uniformly integrable so that convergence of $L^h$ norms also takes place. In order to do this, we first focus on $h\in \mathbb{N}$. Throughout the rest of this section, whenever we say $M_N$ \textit{sampled according to $\mu_N^{(a,b)}$}, we will mean $M_N$ is defined as in \eqref{eq:limitingrvapprox} where $(x_1^{(N)},\ldots, x_N^{(N)})$ has the law 
$\mu_N^{(a,b)}$. Now, notice that one may expand:
\begin{equation}
    \mathbb{E}_N^{(a,b)}\left[M_N^h\right] = \frac{1}{N^{2h}} \sum_{\substack{\lambda_1+\ldots+\lambda_l=h\\ \lambda_1 \geq \ldots \geq \lambda_l>0}} \binom{h}{\lambda_1, \ldots, \lambda_l} \kappa_N(\lambda_1, \ldots, \lambda_l)\mathbb{E}_{N}^{\left(a,b\right)}\left[\prod_{j=1}^l x_j^{-\lambda_j}\right],
    \label{eq:expansion}
\end{equation}
where
\begin{equation*}
    \kappa_N(\lambda_1,\ldots,\lambda_l)=\frac{N!}{(N-l)! n_1!\ldots n_m!}.
\end{equation*}
Here, $m$ is the number of distinct integers occurring in the partition $(\lambda_1,\ldots,\lambda_l)$ and $n_k$ is the number of times $k$-th integer occurs. Notice that to obtain this sum, we expanded the $h$-th power as a multinomial and used the symmetry of the Jacobi ensemble under permutations of indices. \\
Using standard arguments from random matrix theory (see, for instance, \cite{Forrester}), one can show that the $l$-point density of the Jacobi ensemble in \eqref{eq:jacobidef} is given by:
\begin{equation}
    \rho_{N}^{(a,b)}\left(x_1,\ldots, x_l\right)= \frac{(N-l)!}{N!} \det\left[K_N^{(a,b)}(x_i,x_j)\right]_{i,j=1}^l.
    \label{eq:dens}
\end{equation}
The so-called $\textit{correlation kernel}$, $K_N(x,y)$, appearing in the determinant is given by:
\begin{equation*}
    K_N^{(a,b)}(x,y) = \sqrt{w^{(a,b)}(x) w^{(a,b)}(y)} \sum_{j=1}^N P_j^{(a,b)}(x) P_j^{(a,b)}(y),
\end{equation*}
where 
\begin{equation*}
    w^{(a,b)}(x) = x^a(1-x)^b,
\end{equation*}
and $\left\{P_j^{(a,b)}\right\}_{j=1}^\infty$ is the sequence of orthonormal Jacobi polynomials corresponding to parameters $a,b$. To be more precise, these are polynomials indexed by their degree that satisfy the orthonormality condition:
\begin{equation*}
    \int_0^1 P_j^{(a,b)}(x) P_k^{(a,b)} (x) w^{(a,b)} (x) dx = \delta_{jk}.
\end{equation*}
In particular, by studying the asymptotics of fixed-dimensional integrals of the correlation kernel, one should be able to obtain the asymptotics of moments of $M_N(a,b)$.
\begin{prop}
Let $a\geq b \geq -\frac{1}{2}$, $h \in \mathbb{N}$, $0<h<a+1$, and  $(\lambda_1,\ldots,\lambda_l)$ be a partition of $h$. Then;
\begin{equation*}
   \kappa_N(\lambda_1, \ldots, \lambda_l)\mathbb{E}_{N}^{\left(a,b\right)}\left[\prod_{j=1}^l x_j^{-\lambda_j}\right] = O \left(N^{2h}\right).
\end{equation*}
\label{pro1}
\end{prop}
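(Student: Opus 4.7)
The plan is to express the expectation as a multiple integral of the Jacobi correlation kernel, reduce via Hadamard's inequality to a product of one-dimensional integrals, and then bound each factor by exploiting the hard-edge scaling of the Jacobi ensemble.

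First, by the symmetry of $\mu_N^{(a,b)}$ and the determinantal marginal formula \eqref{eq:dens},
\begin{equation*}
\mathbb{E}_{N}^{(a,b)}\!\left[\prod_{j=1}^{l} x_j^{-\lambda_j}\right]=\frac{(N-l)!}{N!}\int_{[0,1]^l}\prod_{j=1}^{l} x_j^{-\lambda_j}\det\!\left[K_N^{(a,b)}(x_i,x_j)\right]_{i,j=1}^{l} dx_1\cdots dx_l.
\end{equation*}
Multiplying by $\kappa_N=N!/((N-l)!\,n_1!\cdots n_m!)$, the $N$-dependent factorials cancel, leaving the harmless constant $1/(n_1!\cdots n_m!)$ in front. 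Since $K_N^{(a,b)}$ is the kernel of an orthogonal projection in $L^2(w^{(a,b)})$, the matrix $[K_N^{(a,b)}(x_i,x_j)]$ is positive semi-definite, so Hadamard's inequality gives $\det[K_N^{(a,b)}(x_i,x_j)]\leq \prod_{i=1}^{l}K_N^{(a,b)}(x_i,x_i)$. By Fubini it therefore suffices to prove, for each integer $1\leq\lambda<a+1$, the one-dimensional bound
\begin{equation*}
\int_0^1 x^{-\lambda}\,K_N^{(a,b)}(x,x)\,dx = O\!\left(N^{2\lambda}\right),
\end{equation*}
since taking the product over $j=1,\ldots,l$ then yields $O(N^{2h})$ as $\sum_j\lambda_j=h$.

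The integral on the left equals $\mathbb{E}_N^{(a,b)}[\sum_{j=1}^{N} x_j^{-\lambda}]$, whose dominant contribution comes from the smallest eigenvalues, which concentrate at the hard edge on scale $N^{-2}$. Under the rescaling $x=t/(4N^2)$ one has the classical hard-edge limit
\begin{equation*}
\frac{1}{4N^2}\,K_N^{(a,b)}\!\left(\tfrac{t}{4N^2},\tfrac{t}{4N^2}\right)\longrightarrow K_{\mathrm{Bessel}}^{(a)}(t,t),
\end{equation*}
where $K_{\mathrm{Bessel}}^{(a)}(t,t)\sim C_a\,t^{a}$ as $t\to 0^+$ and $K_{\mathrm{Bessel}}^{(a)}(t,t)=O(t^{-1/2})$ as $t\to\infty$. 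Changing variables formally,
\begin{equation*}
\int_0^1 x^{-\lambda}\,K_N^{(a,b)}(x,x)\,dx = (4N^2)^{\lambda}\int_0^{4N^2} t^{-\lambda}\cdot\frac{1}{4N^2}K_N^{(a,b)}\!\left(\tfrac{t}{4N^2},\tfrac{t}{4N^2}\right)dt,
\end{equation*}
and the limiting integral $\int_0^\infty t^{-\lambda}K_{\mathrm{Bessel}}^{(a)}(t,t)\,dt$ is finite precisely because $\lambda\geq 1>\tfrac{1}{2}$ (large-$t$ end) and $\lambda<a+1$ (small-$t$ end), giving heuristically $O(N^{2\lambda})$.

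To make this rigorous, the main task is to produce an upper envelope for $K_N^{(a,b)}$ that is uniform in $N$: a bound of the form $(4N^2)^{-1}K_N^{(a,b)}(t/(4N^2),t/(4N^2))\leq\varphi(t)$ with $\varphi$ integrable against $t^{-\lambda}\,dt$ on $(0,\infty)$, supplemented by the coarser bulk estimate $K_N^{(a,b)}(x,x)=O(N)$ for $x$ bounded away from $0$. Both ingredients can be extracted from classical uniform asymptotics of Jacobi polynomials (or, in the spirit of \cite{chen}, from Riemann--Hilbert analysis). Splitting the integral at a cut-off of the form $C/N^2$ and applying the hard-edge envelope on $[0,C/N^2]$ and the bulk envelope on $[C/N^2,1]$ converts the heuristic above into the required $O(N^{2\lambda})$ estimate. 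The chief obstacle is precisely this uniform envelope at the hard edge; its divergence once $\lambda\geq a+1$ is exactly what forces the hypothesis $h<a+1$ and fixes the range of validity of the proposition.
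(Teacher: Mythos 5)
Your reduction to the one–dimensional estimate is essentially the paper's: you use Hadamard's inequality $\det[K_N(x_i,x_j)]\le\prod_i K_N(x_i,x_i)$ on the positive semi-definite kernel matrix, whereas the paper expands the determinant over $S_l$ and uses $|K_N(x,y)|\le\sqrt{K_N(x,x)K_N(y,y)}$; both collapse the $l$-fold integral to $\prod_j\int_0^1 x^{-\lambda_j}K_N(x,x)\,dx$ (yours is marginally cleaner since it avoids the $l!$ factor). The change of variables $x=u/N^2$ and the observation that the constraints $\lambda\ge 1>\tfrac12$ and $\lambda<a+1$ govern the two ends of the rescaled integral are also correct and match the paper.

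However, your proposal stops short of proving the key one-dimensional bound $\int_0^1 x^{-\lambda}K_N^{(a,b)}(x,x)\,dx=O(N^{2\lambda})$. You write that ``the main task is to produce an upper envelope'' and that this envelope is ``the chief obstacle,'' then defer it to ``classical uniform asymptotics of Jacobi polynomials'' without stating or applying any such estimate. That is precisely the content of the proposition, so the argument is not closed. The paper closes it with two quotable pointwise bounds: on the rescaled range $u\in(0,1)$ it uses Szeg\H{o}'s bound $\sup_{x\in[0,1]}\bigl(P_j^{(a,b)}(x)\bigr)^2\le C\,j^{2a+1}$ together with $w^{(a,b)}(u/N^2)\le C(u/N^2)^a$, giving $\tfrac{1}{N^2}K_N\bigl(\tfrac{u}{N^2},\tfrac{u}{N^2}\bigr)\le C u^a\cdot N^{-(2a+2)}\sum_{j\le N}j^{2a+1}=O(u^a)$, so the integral $\int_0^1 u^{a-\lambda}\,du$ is finite exactly when $\lambda<a+1$; and on $u\in(1,N^2/2)$ it uses the Erd\'elyi--Szeg\H{o} bound $w^{(a,b)}(x)\bigl(P_j^{(a,b)}(x)\bigr)^2\le C/\sqrt{x(1-x)}$, giving $\tfrac{1}{N^2}K_N\bigl(\tfrac{u}{N^2},\tfrac{u}{N^2}\bigr)\le C u^{-1/2}$ and a finite integral since $\lambda+\tfrac12>1$. (The tail $x\in[\tfrac12,1]$ is handled separately by $\int_{1/2}^1 x^{-\lambda}K_N(x,x)\,dx\le 2^\lambda N=O(N^{2\lambda})$.) Supplying those two explicit kernel bounds is what you would need to turn your sketch into a proof.
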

\begin{proof}
Using \eqref{eq:dens}, (from here on the constant may change from line to line)

\begin{equation}
     \kappa_N(\lambda_1, \ldots, \lambda_l)\mathbb{E}_{N}^{\left(a,b\right)}\left[\prod_{j=1}^l x_j^{-\lambda_j}\right] \leq  \text{const } \int_{[0,1]^l} \prod_{j=1}^l x_j^{-\lambda_j} \det\left[K_N^{(a,b)}(x_i,x_j)\right]_{i,j=1}^l dx_1 \ldots dx_l.
     \label{eq:smallint2}
\end{equation}
Now exapnding the determinant as a finite sum over the $l$-th symmetric group, and using the Cauchy-Schwarz inequality to note
\begin{equation}
    \left|K_N^{\left(a,b\right)}(x,y)\right| \leq \sqrt{K_N^{\left(a,b\right)}(x,x)} \sqrt{K_N^{\left(a,b\right)}(y,y)},
\end{equation} 
we can bound the integral in \eqref{eq:smallint2} by:
\begin{equation*}
    \text{const} \cdot \prod_{j=1}^l \int_0^1 x^{-\lambda_j} K_N^{\left(a,b\right)}(x,x) dx .
\end{equation*}
We claim, for $\lambda\geq 1$, that 
\begin{equation}
    \int_0^1 x^{-\lambda} K_N^{\left(a,b\right)}(x,x) dx = O\left(N^{2\lambda} \right) .
    \label{eq:momentsdct}
\end{equation}
First, note that we may restrict the integration to $(0,\frac{1}{2})$ since
\begin{equation*}
 \int_{\frac{1}{2}}^1 x^{-\lambda} K_N^{\left(a,b\right)}(x,x) dx \leq 2^{h} \int_{\frac{1}{2}}^1 K_N^{(a,b)}(x,x)dx =2^h N\mathbb{P}\left(x_1\in \left(\frac{1}{2},1\right)\right) = O(N^{2\lambda}).
\end{equation*}
Now, making the substitution $x= \frac{u}{N^2}$, the integral over $\left(0,\frac{1}{2}\right)$ becomes:
\begin{equation*}
    N^{2\lambda} \int_0^{N^2/2
    } u^{-\lambda}  \frac{1}{N^2} K_N^{\left(a,b\right)}\left(\frac{u}{N^2},\frac{u}{N^2}\right) du.
\end{equation*} 
On $(0,1)$, we use Szegö's well-known bound (see \cite{szego}):
\begin{equation*}
    \sup_{x\in [0,1]} P_j^2(x) \leq \text{const} \cdot j^{2a+1},
\end{equation*}
to see that
\begin{multline*}
     \int_0^1 u^{-\lambda}  \frac{1}{N^2} K_N^{\left(a,b\right)}\left(\frac{u}{N^2},\frac{u}{N^2}\right) du \\ \leq \frac{\text{const}}{N^{2a+2}} \sum_{j=0}^N  j^{2a+1}\int_0^1 u^{a-\lambda} du \leq \text{const} \cdot \frac{1}{N^{2a+2}} \sum_{j=0}^N j^{2a+1} = O(1) .
\end{multline*}
On $(1,\frac{N^2}{2})$, we use the bound (see \cite{Erdeyli}):
\begin{equation*}
    w^{\left(a,b\right)} (x) \left(P_j^{\left(a,b\right)} (x)\right)^2 \leq \frac{\text{const}}{\sqrt{x(1-x)}},
\end{equation*}
to note that
\begin{equation*}
     \int_1^{N^2/2
    } u^{-\lambda}  \frac{1}{N^2} K_N^{\left(a,b\right)}\left(\frac{u}{N^2},\frac{u}{N^2}\right) du \leq \text{const} \int_1^{N^2} u^{-\lambda-\frac{1}{2}} du = O(1),
\end{equation*}
which gives us the desired result.
\end{proof}
\begin{prop}
Let $a\geq b\geq -\frac{1}{2}$, $a>0$. Then, the sequence $\left\{M_N\right\}_{N\geq 1}$sampled according to $\mu_N^{(a,b)}$ is tight.
\label{prop:tight}
\end{prop}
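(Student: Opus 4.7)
The plan is to deduce tightness from a uniform bound on the first moment $\mathbb{E}_N^{(a,b)}[M_N]$, combined with Markov's inequality. Since $M_N$ is non-negative almost surely (the weight $x^a(1-x)^b$ with $a>-1$ puts no mass at the hard edge $0$), the sequence $\{M_N\}_{N\geq 1}$ is tight precisely when for each $\epsilon>0$ there exists $C>0$ with $\mathbb{P}(M_N>C)<\epsilon$ for all $N$; this reduces to showing that $\mathbb{E}_N^{(a,b)}[M_N]=O(1)$ as $N\to\infty$.

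To establish the latter, I would invoke Proposition \ref{pro1} with $h=1$. The hypothesis $a>0$ places $1$ in the permissible range $(0,a+1)$, so the proposition applies. Since $1$ admits only the trivial partition ($l=1$, $\lambda_1=1$, $\kappa_N(1)=N$), the expansion \eqref{eq:expansion} collapses to
\[
\mathbb{E}_N^{(a,b)}[M_N] \;=\; \frac{1}{N^{2}}\cdot N\cdot \mathbb{E}_N^{(a,b)}\!\left[x_1^{-1}\right],
\]
and Proposition \ref{pro1} bounds $\kappa_N(1)\,\mathbb{E}_N^{(a,b)}[x_1^{-1}]$ by $O(N^{2})$. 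Hence $\mathbb{E}_N^{(a,b)}[M_N]=O(1)$ uniformly in $N$, and Markov's inequality yields $\mathbb{P}(M_N>C)\leq \text{const}/C$, which can be made smaller than any prescribed $\epsilon$ by choosing $C$ sufficiently large.

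I do not foresee a genuine obstacle here: the substantive analytic input---the kernel estimate $\int_0^1 x^{-\lambda}K_N^{(a,b)}(x,x)\,dx=O(N^{2\lambda})$, proved via Szeg\"o's $L^\infty$ bound near the hard edge and the weighted Jacobi polynomial bound in the bulk---has already been absorbed into the preceding proposition. What remains is purely combinatorial (the trivial partition of $h=1$) together with Markov. I would also note that the same argument carried out with any $h\in(0,a+1)\cap\mathbb{N}$ in place of $1$ delivers $\mathbb{E}_N^{(a,b)}[M_N^h]=O(1)$, which is exactly the input that will later upgrade the distributional convergence in \eqref{eq:limitingrvapprox} to convergence of $h$-th moments via uniform integrability.
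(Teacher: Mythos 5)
Your proposal is correct and follows essentially the same route as the paper: bound $\sup_N \mathbb{E}_N^{(a,b)}[M_N]$ uniformly by reducing to the $O(N^2)$ estimate on $\int_0^1 x^{-1}K_N^{(a,b)}(x,x)\,dx$ established in Proposition~\ref{pro1}, then apply Markov's inequality. The only cosmetic difference is that you cite the conclusion of Proposition~\ref{pro1} (with $h=1$, where the partition structure trivializes) whereas the paper re-runs the same kernel argument inline; the underlying analysis is identical.
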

\begin{proof}
Using Markov's inequality:
\begin{equation*}
    \sup_{N \in \mathbb{N}} \mathbb{P}_N^{(a,b)}\left(M_N \geq R\right) \leq \sup_{N \in \mathbb{N}} \frac{\mathbb{E}_N^{(a,b)}\left[M_N\right]}{R}.
\end{equation*}
We claim that
\begin{equation*}
    \sup_{N \in \mathbb{N}}\mathbb{E}_N^{(a,b)}\left[M_N\right] < \infty.
\end{equation*}
Indeed, using symmetry:
\begin{equation*}
    \mathbb{E}_N^{(a,b)}\left[M_N\right] = \frac{1}{N} \mathbb{E}_N^{(a,b)} \left[\frac{1}{x_1}\right]= \frac{1}{N^2} \int_0^1 \frac{K_N^{\left(a,b\right)}(x,x)}{x} dx.
\end{equation*}
We first restrict the integral to $[0,\frac{1}{2}]$ by using the boundedness of the random variable on $\left[\frac{1}{2},1\right]$. Then, arguing as in the proof of Proposition \ref{pro1}, we see that this integral is bounded uniformly in $N$. Hence,
\begin{equation*}
    \lim_{R\to \infty} \sup_{N \in \mathbb{N}} \mathbb{P}_N^{(a,b)}\left(M_N \geq R\right) = 0,
\end{equation*}
which proves the proposition.
\end{proof}
\begin{prop}
For $a\geq b$; $a,b>0$ and $h \in [0,\alpha\left(a+1)\right)$, the sequence $\left\{M_N^h\right\}_{N\geq 1}$ where $M_N$ is sampled according to $\mu_N^{(a,b)}$ is uniformly integrable.
\end{prop}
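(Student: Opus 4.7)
The strategy is to bootstrap from the integer moment bound of Proposition \ref{pro1} to uniform integrability of $\{M_N^h\}$ via a one-line Markov-type comparison. Since $M_N^0 = 1$ is trivially uniformly integrable, we may assume $h > 0$.

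The key observation is that for any exponent $r > h$ and any threshold $K > 0$,
\begin{equation*}
M_N^h \, \mathbf{1}_{\{M_N^h > K\}} \;\leq\; K^{-(r-h)/h}\, M_N^r,
\end{equation*}
since on the event $\{M_N > K^{1/h}\}$ the factor $\left(M_N/K^{1/h}\right)^{r-h}$ is at least one. Consequently, it suffices to produce a single $r > h$ for which $\sup_{N} \mathbb{E}_N^{(a,b)}[M_N^r] < \infty$: taking $K \to \infty$ in the displayed bound then forces the tail expectations to zero uniformly in $N$, which is precisely uniform integrability.

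The natural choice is $r = \alpha(a+1)$, the greatest integer strictly less than $a+1$. The assumption $a > 0$ ensures $r$ is a positive integer, the hypothesis $h < \alpha(a+1)$ gives $r > h$, and the very definition of $\alpha$ gives $r < a+1$. Proposition \ref{pro1} therefore applies with its parameter set to $r$: for every partition $(\lambda_1,\ldots,\lambda_l)$ of $r$,
\begin{equation*}
\kappa_N(\lambda_1,\ldots,\lambda_l)\, \mathbb{E}_N^{(a,b)}\!\left[\prod_{j=1}^l x_j^{-\lambda_j}\right] = O\!\left(N^{2r}\right).
\end{equation*}
Substituting into the expansion \eqref{eq:expansion}, summing over the finitely many partitions of $r$, and dividing by $N^{2r}$ yields $\sup_N \mathbb{E}_N^{(a,b)}[M_N^r] < \infty$, which completes the proof in combination with the Markov-type inequality above.

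The substantive work is already contained in Proposition \ref{pro1}; beyond that there is no essential obstacle. The only mildly delicate point is the selection of an admissible integer $r$ strictly between $h$ and $a+1$, which is exactly why the statement is formulated in terms of the floor-like function $\alpha$: the strict inequality $h < \alpha(a+1)$ leaves room to slide in an integer of the required form.
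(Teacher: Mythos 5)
Your proposal is correct and follows the same route as the paper: reduce uniform integrability to uniform boundedness of the $\alpha(a+1)$-th moment, expand via \eqref{eq:expansion}, and invoke Proposition \ref{pro1}. The only difference is that you spell out the Markov-type tail estimate in detail, whereas the paper simply cites the standard fact that a uniformly bounded higher moment implies uniform integrability of lower powers.
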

\begin{proof}
It is sufficient to prove that (let $\alpha=\alpha\left(a+1\right)$) 
\begin{equation*}
    \sup_{N \in \mathbb{N}} \mathbb{E}_N^{(a,b)}\left[M_N^\alpha\right] < \infty.
\end{equation*}
To see this, we may expand as before:
\begin{equation}
    \mathbb{E}_N^{(a,b)}\left[M_N^\alpha\right] = \frac{1}{N^{2\alpha}} \sum_{\substack{\lambda_1+\ldots+\lambda_l=\alpha\\ \lambda_1 \geq \ldots \geq \lambda_l>0}} \binom{\alpha}{\lambda_1, \ldots, \lambda_l} \kappa_N(\lambda_1, \ldots, \lambda_l)\mathbb{E}_{N}^{\left(a,b\right)}\left[\prod_{j=1}^l x_j^{-\lambda_j}\right].
\end{equation}
Using the bounds obtained in Proposition \ref{pro1}, this is uniformly bounded in $N$, which completes the proof, since uniform boundedness of a higher moments of a random variable gives the uniform integrability of lower moments.
\end{proof}
\begin{prop}\label{prop:convergence}
Fix $a\geq b$; $a,b>0$. Then there exists a holomorphic function $f(z)$ on $S:=\left\{z \in \mathbb{C}: \Re(z) > 0 \right\}$ such that uniformly on compact subsets of $S$,
\begin{equation*}
    f_N(z) := \mathbb{E}_N^{(a,b)}\left[e^{-zM_N}\right] \xrightarrow[]{\text{as } N \to \infty  } f(z).
\end{equation*}
\end{prop}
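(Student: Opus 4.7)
The plan is to combine a normal families argument with pointwise convergence on the positive real axis. First I would observe that since $M_N$ is almost surely non-negative (it is $N^{-2}$ times a sum of reciprocals of points in $(0,1)$), for any $z \in S$,
\[ |f_N(z)| \leq \mathbb{E}_N^{(a,b)}\left[e^{-\Re(z) M_N}\right] \leq 1. \]
Hence $\{f_N\}_{N \geq 1}$ is locally uniformly bounded on $S$, so by Montel's theorem it is a normal family of holomorphic functions on the half-plane $S$. I would then invoke Vitali's convergence theorem: to obtain uniform convergence on compact subsets of $S$ to a holomorphic limit, it suffices to establish pointwise convergence of $f_N$ on any subset of $S$ possessing an accumulation point inside $S$, and I would take this set to be the positive real axis $(0,\infty)\subset S$.

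For fixed $t > 0$, I would rewrite the Laplace transform as the ratio of two Jacobi partition functions,
\[ f_N(t) = \frac{\int_{[0,1]^N} \prod_j x_j^a (1-x_j)^b e^{-t/(N^2 x_j)} \prod_{j<k}(x_k-x_j)^2 \, d\bm{x}}{\int_{[0,1]^N} \prod_j x_j^a (1-x_j)^b \prod_{j<k}(x_k-x_j)^2\, d\bm{x}}, \]
so that the numerator is a Hankel-type determinant against a Jacobi weight bearing an essential singularity $e^{-t/(N^2 x)}$ at the hard edge $0$. This is precisely the object studied in \cite{chen} by means of the Deift--Zhou nonlinear steepest descent method applied to the associated Riemann--Hilbert problem; under the hypothesis $a \geq b$, $a,b > 0$, the results there supply the large-$N$ asymptotics of this ratio and in particular the existence of a finite pointwise limit $f(t) := \lim_{N \to \infty} f_N(t)$ for every $t > 0$.

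Combining the two ingredients, Vitali's theorem will upgrade pointwise convergence along $(0,\infty)$ to uniform convergence of $f_N$ on compact subsets of $S$, and the limit $f$ will automatically be holomorphic there. The hard part is the pointwise step, which relies on the full strength of the Riemann--Hilbert asymptotic analysis in \cite{chen}; the parameter restriction $a \geq b > 0$ in the proposition is dictated precisely by the hypotheses under which those results apply, and as the author has already remarked, the analogous statement for the parameters relevant to $SO(2N)$ fails to follow by this route for exactly this reason.
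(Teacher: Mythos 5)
Your argument is correct and is essentially the paper's own: both establish the uniform bound $|f_N(z)|\leq 1$ on $S$, use Montel's theorem for normality, cite \cite{chen} for pointwise convergence on $(0,\infty)$, and then upgrade to locally uniform convergence on $S$. The only cosmetic difference is that you package the final step as Vitali's convergence theorem, whereas the paper unwinds it into the equivalent subsequence--of--subsequences plus identity theorem argument.
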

\begin{proof}
First, note that $f_N(t)$ converges pointwise for all $t\in (0,\infty)$ (see  \cite{chen}). Now, if $z\in{S}$,
\begin{equation*}
    \left|\mathbb{E}_N^{(a,b)}\left[e^{-zM_N}\right]\right| \leq \mathbb{E}_N^{(a,b)}\left[\left|e^{-zM_N}\right|\right] \leq \mathbb{E}_N^{(a,b)}[1] =1,
\end{equation*}
and hence by Montel's theorem, every subsequence of $f_N$ has a further subsequence that converges uniformly on compact subsets of $S$ to a holomorphic function. Further, these limiting holomorphic functions agree on $(0,\infty)$ so that by identity theorem, every converging subsequence converges uniformly on compacts to the same limiting function, $f(z)$. Hence, the full sequence converges to $f(z)$ uniformly on compacts. 

\end{proof}
\begin{proof}[Proof of Theorem \ref{mainthm2}]
By Proposition \ref{prop:tight}, the sequence $\{M_N\}_{N\geq 1}$ is tight so that there exists a subsequence $\{M_{N_k}\}_{k \geq 1}$ that converges weakly to some limiting random variable $M=M(a,b)$. In particular, for all $t \in (0,\infty)$,
\begin{equation}
    \lim_{k \to \infty}f_{N_k}(t) =f(t)= \mathbb{E}\left[e^{-tM(a,b)}\right].
\end{equation}
But then by Proposition \ref{prop:convergence}, this implies,
\begin{equation*}
    \lim_{N\to \infty} f_N(t) = \mathbb{E}
    \left[e^{-tM(a,b)}\right].
\end{equation*}
Thus, using a standard result from probability theory (see for example \cite{foundations}), we deduce that 
\begin{equation*}
    M_N \xrightarrow[d]{\text{as } N \to \infty  } M=M(a,b).
\end{equation*}
which proves the first claim of the theorem. To prove the second claim, we simply use Skorokhod's representation theorem so that we may assume without loss of generality that the convergence is in fact in almost sure sense. Combining this with the uniform integrability of $\{M_N^h\}$ gives the second part of the claim. For the final part, we use Proposition \ref{prop:convergence} to observe that since $\{f_N\}_{N\geq1}$ converges uniformly on compact sets to $f$, so do all the derivatives of  $\{f_N\}_{N\geq1}$. Hence, one may take the limit of the finite-$N$ differential equation (and the corresponding boundary conditions) proved for $f_N(t)$ in \cite{chen} to get the desired differential equation.\footnote{We note that the scaling used in \cite{chen} has an extra factor of $2$, which we ommitted here to make the presentation clearer, which causes a minor difference in the differential equation and boundary conditions.} For the boundary conditions, we note that the extra conditions on the parameters is due to the fact that for $\tau_{a,b}$ to have derivatives at $0$, we need certain moments of $M(a,b)$ to be finite.
\end{proof}
\section{Proof of Theorem \ref{mainthm}}

In this section, we will compute the $N\to \infty$ asymptotics of $\mathfrak{R}_N^G(s,h)$. We start by observing that using the same transformation as before,
$x_j =\frac{1}{2}\left(1- \cos \theta_j\right)$, one may write, 
\begin{equation}
   \mathfrak{R}_N^G(s,h)= \frac{\mathfrak{R}_N^G(s,0)}{2^h} \cdot \mathbb{E}_{N}^{\left(a,b\right)} \left[\left|\sum_{i=1}^N  \frac{\sqrt{1-x_j}}{\sqrt{x_j}}\right|^{h}\right],
     \label{eq:jacobitr}
\end{equation}
where $(a,b)=(s-\frac{1}{2},-\frac{1}{2})$ if $G=SO(2N)$; and $(a,b)=(s+\frac{1}{2},\frac{1}{2})$ if $G=Sp(2N)$. Here, as before, $\mathbb{E}_{N}^{\left(a,b\right)}$ denotes expectation taken against the Jacobi ensemble with parameters $a,b$. Now, the asymptotics of $\mathfrak{R}_N^{Sp}(s,0)=\mathfrak{L}_N^{Sp}(s,0)$ are given in \eqref{eq:0thmomasymptoticssp}; and the asymptotics $\mathfrak{R}_N^{SO}(s,0)$ are also computed explicitly in \cite{keatingsnaith2} to be:
\begin{equation*}
    \mathfrak{R}_N^{SO}(s,0)\sim N^{\frac{s(s-1)}{2}} \frac{2^{\frac{s^2}{2}}G(1+s)\sqrt{\Gamma(1+2s)}}{\sqrt{G(1+2s)}\Gamma(1+s)}.
\end{equation*}
Hence, it becomes sufficient to compute the $N\to \infty$ asymptotics of
\begin{equation*}
   \mathbb{E}_{N}^{\left(a,b\right)} \left[\left|\sum_{i=1}^N  \frac{\sqrt{1-x_j}}{\sqrt{x_j}}\right|^{h}\right],
\end{equation*}
for appropriate choices of $a,b$; which is precisely the content of the main result of this section.
\begin{thm}
Let $a\geq b\geq-\frac{1}{2}$ and $a> 0$. Then for all $h \in [0,\alpha(2a+2))$, we have the convergence:
\begin{equation*}
    \mathbb{E}_N^{(a,b)} \left[\left(\mathcal{Z}_N\right)^h\right]=\mathbb{E}_N^{(a,b)} \left[\left(\frac{1}{N \log N}\sum_{j=1}^N \frac{\sqrt{1-x_j^{(N)}}}{\sqrt{x_j^{(N)}}}\right)^h\right] \xrightarrow[]{ N \to \infty} \left(\frac{2}{\pi}\right)^{h}.
\end{equation*}

\end{thm}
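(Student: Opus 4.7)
The plan is to mirror the strategy of Section 2: first establish convergence in probability of $\mathcal{Z}_N$ to the deterministic constant $2/\pi$, and then upgrade this to convergence of $L^h$ norms via a uniform integrability argument. In contrast to Theorem \ref{mainthm2}, the limit is deterministic because the normalization by $N\log N$ is tailored so that it cancels the logarithmic divergence produced by the bulk eigenvalue density against the $1/\sqrt{x}$ singularity, while the random fluctuations from the hard-edge eigenvalues contribute only $O(1)$ to the unnormalized sum, hence nothing after dividing by $\log N$.

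I would start with the first moment. Writing
\begin{equation*}
    \mathbb{E}_N^{(a,b)}[\mathcal{Z}_N] = \frac{1}{N\log N}\int_0^1 \sqrt{\tfrac{1-x}{x}}\,K_N^{(a,b)}(x,x)\,dx,
\end{equation*}
I split the integration into the hard-edge region $(0,1/N^2)$, the bulk $(1/N^2,1/2)$, and the smooth region $(1/2,1)$. On the bulk, the standard pointwise asymptotics of the Jacobi correlation kernel give $\tfrac{1}{N}K_N^{(a,b)}(x,x)\to \tfrac{1}{\pi\sqrt{x(1-x)}}$, so the integrand is asymptotically $\tfrac{1}{\pi x}$, whose integral over $(1/N^2,1/2)$ equals $(2\log N + O(1))/\pi$. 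The hard-edge piece and the smooth piece each contribute at most $O(1)$ (via the Szeg\"o and Erd\'elyi bounds used in Proposition \ref{pro1}) and therefore vanish after dividing by $\log N$. This yields $\mathbb{E}_N^{(a,b)}[\mathcal{Z}_N]\to 2/\pi$. For concentration I would bound the variance via the determinantal identity
\begin{equation*}
    \mathrm{Var}_N^{(a,b)}\Big[\sum_j f(x_j)\Big] = \tfrac{1}{2}\int_0^1\!\!\int_0^1 (f(x)-f(y))^2 \bigl(K_N^{(a,b)}(x,y)\bigr)^2\,dx\,dy
\end{equation*}
with $f(x)=\sqrt{(1-x)/x}$. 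A region-by-region analysis analogous to the mean calculation (using Cauchy--Schwarz $|K_N(x,y)|^2\leq K_N(x,x)K_N(y,y)$ where the test function is well separated from the singularity, and the pointwise kernel estimates elsewhere) gives a variance of order at most $(\log N)^2$, so that $\mathrm{Var}[\mathcal{Z}_N]=O((N\log N)^{-2}(\log N)^2)\to 0$, and Chebyshev delivers $\mathcal{Z}_N\to 2/\pi$ in probability.

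To upgrade in-probability convergence to $L^h$ convergence, it suffices to show uniform integrability of $\{\mathcal{Z}_N^h\}_{N\geq 1}$ for $h<\alpha(2a+2)$, and this follows once $\sup_N\mathbb{E}_N^{(a,b)}[\mathcal{Z}_N^\alpha]<\infty$ for the integer $\alpha=\alpha(2a+2)$. Following the proof of Proposition \ref{pro1}, I would expand $\mathcal{Z}_N^\alpha$ as a multinomial sum over partitions of $\alpha$ and bound each term using the determinantal density \eqref{eq:dens} together with Cauchy--Schwarz, reducing matters to the single integrals
\begin{equation*}
    \int_0^1 x^{-\lambda/2} K_N^{(a,b)}(x,x)\,dx,\qquad 1\leq \lambda\leq\alpha.
\end{equation*}
After the substitution $x=u/N^2$, the Szeg\"o and Erd\'elyi bounds show this is $O(N^{\lambda}\log N)$, with the logarithmic factor coming from the boundary case $\lambda=1$ and the overall power controlled by the integrability condition $\lambda/2<a+1$. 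Tracking the partition-$(1,\ldots,1)$ term (which dominates) gives $\mathbb{E}[\mathcal{Z}_N^\alpha]=O(1)$, supplying the required uniform bound; combined with in-probability convergence this yields $\mathbb{E}[\mathcal{Z}_N^h]\to(2/\pi)^h$ for all $h<\alpha(2a+2)$. The main obstacle is precisely this hard-edge analysis for the singular integrand $x^{-\lambda/2}$: the restriction $h<\alpha(2a+2)$ is exactly what guarantees $\int_0^1 u^{a-\lambda/2}du<\infty$ for every $\lambda\leq\alpha$, and this is where the parameter range appearing in the theorem originates.
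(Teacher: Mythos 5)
Your overall strategy --- establish concentration of $\mathcal{Z}_N$ at $2/\pi$, then upgrade to convergence of $h$-th moments by bounding $\mathbb{E}_N^{(a,b)}[\mathcal{Z}_N^{\alpha(2a+2)}]$ uniformly via the multinomial expansion and Cauchy--Schwarz on the determinant --- is the same as the paper's, and the uniform-integrability half of the argument is essentially identical. The concentration half is where you diverge, and where two genuine gaps arise.

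First, the claim that $\mathrm{Var}\bigl[\sum_j f(x_j)\bigr]=O(\log^2 N)$ with $f(x)=\sqrt{(1-x)/x}$ is not correct. The hard-edge eigenvalues do not contribute negligibly to the variance: near $x,y\sim N^{-2}$ one has $(f(x)-f(y))^2\sim N^2$ and $K_N(x,y)^2\sim N^4$ on a region of area $\sim N^{-4}$, so that region alone already yields a contribution of order $N^2$. Equivalently, $x_1^{(N)}\sim N^{-2}$ fluctuates on the $N^{-2}$ scale, so $f(x_1^{(N)})\sim N$ fluctuates on the order-$N$ scale, forcing $\mathrm{Var}\bigl[\sum_j f(x_j)\bigr]\gtrsim N^2$. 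Fortunately this does not kill the argument, since $N^2/(N\log N)^2 = 1/\log^2 N\to 0$, and what you actually need is only $o(N^2\log^2 N)$. This can in fact be obtained more cheaply than by your region-by-region analysis: for a projection kernel and a non-negative test function $f$, the inequality $f(x)f(y)\leq\tfrac12\bigl(f(x)^2+f(y)^2\bigr)$ together with the reproducing property $\int_0^1 K_N(x,y)^2\,dy=K_N(x,x)$ give
\begin{equation*}
 0\leq \mathrm{Var}\Bigl[\textstyle\sum_j f(x_j)\Bigr]=\int_0^1 f^2 K_N(x,x)\,dx - \iint f(x)f(y)K_N(x,y)^2\,dx\,dy\leq \int_0^1 f^2 K_N(x,x)\,dx,
\end{equation*}
and the right-hand side is $O(N^2)$ by exactly the Szeg\"o and Erd\'elyi estimates in the proof of Proposition \ref{pro1} (the $\lambda=2$ case of \eqref{eq:momentsdct}). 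This is actually shorter than the paper's route, which instead controls the cross term $\iint f(x)f(y)K_N(x,y)^2$ directly via a delicate decomposition in $j/k$ (Proposition \ref{prop3}).

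Second, for the first moment you invoke the pointwise bulk limit $\tfrac1N K_N^{(a,b)}(x,x)\to \tfrac1{\pi\sqrt{x(1-x)}}$ to integrate $x^{-1/2}(1-x)^{1/2}K_N(x,x)$ over $(1/N^2,1/2)$. That limit is a statement for fixed $x\in(0,1)$; using it on an interval whose left endpoint shrinks to the hard edge at rate $N^{-2}$, precisely where the kernel transitions to Bessel behaviour, requires an error bound that is uniform on the whole interval. The paper obtains exactly this from the Frenzen--Wong uniform expansion \eqref{eq:wongest}. Without some such uniform estimate your derivation of $\int_{1/N^2}^{1/2}\sim (2/\pi)\log N$ is incomplete, even though the answer is right. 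Both gaps are fixable, and with the variance bound above your route is arguably leaner than the paper's, but as written the key quantitative claims are not justified.
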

We will first show the convergence of $\mathcal{Z}_N$ to $\frac{2}{\pi}$ in $L^2$, and then combine this with the uniform integrability of the $h$-th power of $\mathcal{Z}_N$. With this goal in mind, let us focus on the case $h \in \mathbb{N}$ for now. Then, as in equation \eqref{eq:expansion}, we can expand:
\begin{multline}
     \mathbb{E}_{N}^{\left(a,b\right)} \left[\left|\sum_{j
     =1}^N  \frac{\sqrt{1-x_j}}{\sqrt{x_j}}\right|^{h}\right] \\=  \sum_{\substack{\lambda_1+\ldots+\lambda_l=h\\ \lambda_1 \geq \ldots \geq \lambda_l>0}} \binom{h}{\lambda_1, \ldots, \lambda_l} \kappa_N(\lambda_1, \ldots, \lambda_l)\mathbb{E}_{N}^{\left(a,b\right)}\left[\prod_{j=1}^l x_j^{-\lambda_j/2}\left(1-x_j\right)^{\lambda_j/2}\right].
     \label{eq:combsum5}
\end{multline}
The main idea will be to show that the dominating contribution in this finite sum comes from the partition $\lambda_1=\ldots=\lambda_h=1$. Then, to compute the asymptotics of the dominating term for the second moment, we will write the expectation as a $2$-dimensional integral which will involve a $2\times 2$ determinant of the correlation kernel. We will see that the dominating contribution within this integral will come form the product of the diagonal entries. Thus, in essence, the main contribution will come from the integral whose asymptotics is computed in the following proposition.
\begin{prop}
Let $a\geq b \geq -\frac{1}{2}$, $a \geq 0$. We have:
\begin{equation*}
  \mathcal{S}_N=  \int_0^1 x^{-\frac{1}{2}}(1-x)^{1/2} K_N^{\left(a,b\right)}(x,x) dx \sim \frac{2}{\pi} N \log N.
\end{equation*}
\label{prop1}
\end{prop}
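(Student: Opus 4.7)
The plan is to isolate the logarithmic divergence by cutting the integration interval at the hard-edge scale. Heuristically, in the bulk, $K_N^{(a,b)}(x,x)/N$ converges to the equilibrium density $\tfrac{1}{\pi\sqrt{x(1-x)}}$, reducing the integrand to $\tfrac{N}{\pi x}$ to leading order. Integrated from $\delta$ to $1/2$ this gives $\tfrac{N}{\pi}\log(1/\delta)+O(N)$, and the natural cutoff is $\delta\sim 1/N^2$ (the hard-edge scale), producing the $\log N$ factor.

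To make this precise, first split $\mathcal{S}_N=\int_0^{1/2}+\int_{1/2}^{1}$. The second piece is $O(N)$ because $x^{-1/2}(1-x)^{1/2}$ is uniformly bounded on $[1/2,1]$ and $\int_0^1 K_N^{(a,b)}(x,x)\,dx=N$. For the first piece, perform the hard-edge change of variable $x=u/N^2$:
\begin{equation*}
    \int_0^{1/2} x^{-1/2}(1-x)^{1/2} K_N^{(a,b)}(x,x)\,dx = N \int_0^{N^2/2} u^{-1/2}(1-u/N^2)^{1/2} \widetilde{K}_N(u)\,du,
\end{equation*}
where $\widetilde{K}_N(u):=N^{-2} K_N^{(a,b)}(u/N^2,u/N^2)$. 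Fix a large constant $T>0$ and split this integral at $u=T$.

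On $(0,T)$ one invokes the Bessel hard-edge scaling limit $\widetilde{K}_N(u)\to K^{\mathrm{Bess},a}(u,u)$ (uniformly on compact subsets of $(0,\infty)$), together with $K^{\mathrm{Bess},a}(u,u)=O(u^a)$ near $0$ and $a\geq 0$, to conclude the contribution from this range is $O(N)$, hence negligible compared to $N\log N$. On $(T,N^2/2)$ one uses the Szeg\"o-type bulk asymptotic $K_N^{(a,b)}(x,x)=\tfrac{N}{\pi\sqrt{x(1-x)}}\bigl(1+o(1)\bigr)$, under which the integrand reduces to $\tfrac{1}{\pi u}$ to leading order, giving
\begin{equation*}
    N\int_T^{N^2/2} u^{-1/2}(1-u/N^2)^{1/2} \widetilde{K}_N(u)\,du = \frac{N}{\pi}\bigl(2\log N - \log(2T)\bigr) + o(N\log N).
\end{equation*}
Combining the three contributions and noting that $T$ can be taken arbitrarily large to absorb the additive constants yields $\mathcal{S}_N\sim \tfrac{2}{\pi} N\log N$.

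The main technical obstacle is establishing the bulk asymptotic uniformly down to $x\sim T/N^2$, that is, throughout the transition between the Bessel and bulk regimes. This can be handled using Plancherel--Rotach type uniform asymptotics for Jacobi polynomials (accessible via the Riemann--Hilbert techniques of \cite{chen}, or via Szeg\"o's classical results). Alternatively, the Christoffel--Darboux formula combined with the Szeg\"o pointwise bound $P_j^{(a,b)}(x)^2\leq Cj^{2a+1}$ already used in Proposition \ref{pro1} is enough to show that the error term contributes at the subleading $O(N)$ scale in the relevant range.
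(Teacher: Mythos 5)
Your sketch takes a genuinely different route from the paper's. You decompose the integral spatially — a hard-edge window $u<T$ and a bulk/transition window $T<u<N^2/2$ — and argue via a kernel scaling limit on the former and a bulk density asymptotic on the latter. The paper instead expands $K_N^{(a,b)}(x,x)=\sum_{l=1}^{N} w^{(a,b)}(x)P_l^{(a,b)}(x)^2$ and works term by term in the degree $l$: it invokes the Frenzen--Wong Mehler--Heine-type expansion,
\begin{equation*}
\sqrt{w^{(a,b)}(x)}\,P_l^{(a,b)}(x)=\sqrt{h_l}\left(\frac{\arccos(1-2x)}{\sqrt{x(1-x)}}\right)^{1/2}\!\left(\frac{J_a\bigl((l+\tfrac{a+b+1}{2})\arccos(1-2x)\bigr)}{(l+\tfrac{a+b+1}{2})^{a}}+O\bigl(l^{-a-1}\bigr)\right),
\end{equation*}
uniform in $x\in[0,\epsilon]$, and then pulls the $\log l$ out of $\int J_a^2$. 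The advantage of that organization is that a single uniform approximation covers both the hard-edge and the near-edge-bulk regimes of $x$ at once, so there is no matching problem to solve.

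Your leading-order computation is right and conceptually transparent — the $\log N$ is the arcsine singularity $\tfrac{1}{\pi x}$ integrated down to the hard-edge cutoff $x\sim N^{-2}$ — but there is a genuine gap in the uniformity step. You need $K_N^{(a,b)}(x,x)=\tfrac{N}{\pi\sqrt{x(1-x)}}(1+o(1))$ with the $o(1)$ uniform over $x\in(T/N^2,1/2)$, i.e.\ all the way down to the hard-edge scale; that intermediate regime is precisely the delicate part, and you flag it yourself. However, your proposed fallback — Christoffel--Darboux plus the Szeg\H{o} pointwise bound $P_j^{(a,b)}(x)^2\le C j^{2a+1}$ — does not close it: the Szeg\H{o} estimate is a one-sided bound, not an asymptotic, so it controls the \emph{order} of $K_N(x,x)$ near the edge (giving $O(N^2)$ at $x\sim N^{-2}$, consistent with the expected size) but cannot produce the constant $\tfrac{1}{\pi}$ needed for the matching to yield $\tfrac{2}{\pi}$. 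To complete your route rigorously you would need full Riemann--Hilbert uniform asymptotics for Jacobi polynomials across the edge-to-bulk transition, a heavier input than the Frenzen--Wong expansion the paper actually uses.
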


\begin{proof}
We start by fixing an $\epsilon \in (0,1)$ and dividing the integral into two contributions: $[0,\epsilon]$ and $[\epsilon,1]$. First, observe that:
\begin{multline}
    \frac{1}{N \log N}\int_\epsilon^1 x^{-\frac{1}{2}}(1-x)^{1/2} K_N^{\left(a,b\right)} (x,x) dx = \frac{1}{\log N} \mathbb{E}_{N}^{\left(a,b\right)} \left[\frac{\sqrt{1-x_1}}{\sqrt{x_1}}\mathbf{1}_{\{x_1\geq \epsilon\}}\right]\\ \leq \frac{1}{\log N} \frac{\sqrt{1-\epsilon}}{\sqrt{\epsilon}} \xrightarrow[]{\text{as } N \to \infty  } 0.
    \label{eq:zerocontribution}
\end{multline}
To compute the asymptotics of the contribution on $[0,\epsilon]$, one first needs to consider the $ l \to \infty$ asymptotics of:
\begin{equation}
   H(l):= \int_0^{\epsilon} x^{-\frac{1}{2}} (1-x)^{\frac{1}{2}} w^{\left(s-\frac{1}{2},-\frac{1}{2}\right)}(x) \left(P_l^{\left(a,b\right)}(x)\right)^2 dx.
\end{equation}
$\textbf{Step 1}:$ Let
\begin{equation*}
     F(l):=\int_0^\epsilon {h_l}\frac{\arccos\left(1-2x\right)}{x} \frac{J_a^2\left(\left(l+\frac{a+b+1}{2}\right)\arccos(1-2x)\right)}{\left(l+\frac{a+b+1}{2}\right)^{2a}} dx.
\end{equation*}
We claim that there exists a constant $\mathfrak{C}>0$ such that:
\begin{equation*}
    \sup_{l\in \mathbb{N}} \left|H(l)-F(l)\right| \leq \mathfrak{C}.
\end{equation*}
To prove this claim, we will use an asymptotic expansion obtained in \cite{FrenzenWong}:
\begin{multline}
    \sqrt{w^{\left(a,b\right)}(x)} P_l^{\left(a,b\right)}(x) \\=  \sqrt{h_l}\left(\frac{\arccos\left(1-2x\right)}{\sqrt{x(1-x)}}\right)^{\frac{1}{2}}  \left( \frac{J_{a}\left(\left(l+\frac{a+b+1}{2}\right)\arccos(1-2x)\right)}{\left(l+\frac{a+b+1}{2}\right)^{a}} + O\left(\frac{1}{\left(l+\frac{a+b+1}{2}\right)^{a+1}}\right)\right),
    \label{eq:wongest}
\end{multline}
where 
\begin{equation*}
    h_l = \frac{(2l+a+b+1)\Gamma \left(l+a+1\right)\Gamma(l+a+b+1)}{2\Gamma(l+1)\Gamma(l+b+1)},
\end{equation*}
and $J_\nu$ denotes the Bessel function with parameter $\nu$. Here, error term is uniform on $x \in [0,\epsilon]$. Now, note that for two sequences of functions on $[0,\epsilon]$; $\left\{a_l(x)\right\}_{l=1}^\infty$ and $\left\{b_l(x)\right\}_{l=1}^\infty$, if
\begin{equation*}
    \lim_{l \to \infty} \sup_{x \in [0,\epsilon]}\left|a_l(x)-b_l(x)\right|=0,
\end{equation*}
then for $l$ large enough,
\begin{equation*}
    \left|a_l^2(x)- b_l^2(x)\right| \leq \left|a_l(x)+b_l(x)\right| \leq 2(1 + \left|b_l(x)\right|).
\end{equation*}
Thus, using \eqref{eq:wongest}, and the asymptotics of $\Gamma(z)$; we can get the estimate:
\begin{multline*}
    x^{-\frac{1}{2}} (1-x)^{\frac{1}{2}} \biggl|w^{\left(a,b\right)}(x) \left(P_l^{\left(a,b\right)}(x)\right)^2  -  {h_l}\frac{\arccos\left(1-2x\right)}{\sqrt{x(1-x)}} \frac{J_{a}^2\left(\left(l+\frac{a+b+1}{2}\right)\arccos(1-2x)\right)}{\left(l+\frac{a+b+1}{2}\right)^{2a}} \biggl| \\ \leq \text{const}\cdot x^{-\frac{1}{2}} (1-x)^{\frac{1}{2}}\left(1+   \sqrt{l} \left(\frac{\arccos\left(1-2x\right)}{\sqrt{x(1-x)}}\right)^\frac{1}{2} \left|J_{a}\left(\left(l+\frac{a+b+1}{2}\right)\arccos(1-2x)\right)\right| \right)\\ \leq \text{const} \cdot   x^{-\frac{3}{4}},
\end{multline*}
where the constant is independent of $x$, and for the last inequality we use the bound,
\begin{equation*}
    \left|J_\alpha(x)\right| \leq C x^{-\frac{1}{2}}.
\end{equation*}
Since $x^{-3/4} \in L^1[0,\epsilon]$, our claim follows. \\
Now that our claim is established, we want to compute the asymptotics of $\sum_l H(l)$ by comparing with $\sum_l F(l)$. In particular, note that:
\begin{equation}
    \lim_{N\to \infty} \frac{1}{N \log N} \sum_{l=1}^N \left|H(l) -F(l)\right| \leq   \lim_{N\to \infty} \frac{1}{N \log N} \sum_{l=1}^N \text{const}=0.
    \label{eq:limiteq}
\end{equation}
\textbf{Step 2:} We compute the asymptotics of $\sum_l F(l)$. We begin by making the change of variables $x=\frac{1}{2}\left(1-\cos \theta\right)$ to obtain:
\begin{equation*}
    F(l) =\frac{h_l}{\left(l+\frac{a+b+1}{2}\right)^{2a}} \int_0^{\arccos(1-2\epsilon)}  \frac{\theta \sin \theta}{1-\cos \theta} J_{a}^2\left(\left(l+\frac{a+b+1}{2}\right)\theta\right) d\theta.
\end{equation*}
 Noting the fact that the function
\begin{equation}
    g(\theta): \theta \mapsto \frac{\theta \sin \theta}{1-\cos \theta},
\end{equation}
is decreasing on $[0,\epsilon]$ with $g(0)=2$; and making the change of variables $u=\left(l+\frac{a+b+1}{2}\right)\theta$, one can infer,
\begin{multline*}
   g(\arccos(1-2\epsilon)) \frac{h_l}{\left(l+\frac{a+b+1}{2}\right)^{2a+1}} \int_0^{\left(l+\frac{a+b+1}{2}\right)\arccos(1-2\epsilon)}  J_{a}^2(u) du\\ \leq  F(l)\leq 2 \frac{h_l}{\left(l+\frac{a+b+1}{2}\right)^{2a+1}} \int_0^{\left(l+\frac{a+b+1}{2}\right)\arccos(1-2\epsilon)}  J_{a}^2(u) du.
\end{multline*}
Now, on $[0,1]$, the integrand is bounded and hence we want to compute the growth at infinity. Using the well-known asymptotics of Bessel functions with large argument:
\begin{equation}
    J_a(x)= \sqrt{\frac{2}{\pi x}}\cos\left(x- \frac{\pi}{2}\left(a+\frac{1}{2}\right)\right) + O(x^{-3/2}),
    \label{eq:besselasymptotics}
\end{equation}
we obtain (where we let $l'=\left(l+\frac{a+b+1}{2}\right)\arccos(1-2\epsilon)$):
\begin{multline*}
    \int_1^{l'}  J_{a}^2(u) du =  \int_1^{l'} \frac{2}{\pi u} \cos^2\left(u- \frac{\pi}{2}\left(a+\frac{1}{2}\right)\right)    du +O(1)\\=\int_1^{l'} \frac{1}{\pi u} \left(1+\cos\left(2u- {\pi}\left(a+\frac{1}{2}\right)\right)\right)  du + O(1) = \frac{\log l}{\pi} + O(1).
\end{multline*}
Here, for the last equality we used the fact that
\begin{equation*}
    \sup_{l' \geq 1} \int_1^{l'} \frac{\cos\left(2u- {\pi}\left(a+\frac{1}{2}\right)\right)}{u} du < \infty.
\end{equation*}
Using Stirling's approximation for the Gamma function, one can compute: 
\begin{equation*}
    \lim_{l \to \infty} \frac{h_l}{\left(l+\frac{a+b+1}{2}\right)^{2a+1}}=1,
\end{equation*}
and hence if $\delta>0$, there exists $M \in \mathbb{N}$ such that for every $l \geq M$,
\begin{equation*}
    \frac{h_l}{\left(l+\frac{a+b+1}{2}\right)^{2a+1}} \leq 1+\delta.
\end{equation*}
In particular,
\begin{equation*}
   \limsup_{N\to \infty} \frac{1}{N \log N} \sum_{l=1}^N F(l)\leq  \limsup_{N \to \infty} \frac{2(1+\delta)\log \left(N!\right)}{\pi N \log N} + \frac{O(1)}{\log N} = \frac{2(1+\delta)}{\pi},
\end{equation*}
so that letting $\delta \to 0$, we get:
\begin{equation*}
    \limsup_{N\to \infty} \frac{1}{N \log N} \sum_{l=1}^N F(l)\leq  \frac{2}{\pi}.
\end{equation*}
Similarly one may obtain the lower bound:
\begin{equation*}
     \liminf_{N\to \infty} \frac{1}{N \log N} \sum_{l=1}^N F(l)\geq  \frac{g(\arccos(1-2\epsilon))}{\pi}.
\end{equation*}
But then by combining \eqref{eq:zerocontribution} and \eqref{eq:limiteq}, this implies:
\begin{equation*}
  \frac{g(\arccos(1-2\epsilon))}{\pi} \leq  \liminf_{N\to \infty} \mathcal{S}_N \leq  \limsup_{N\to \infty} \mathcal{S}_N \leq  \frac{2}{\pi}.
\end{equation*}
Since this holds for any $\epsilon >0$ small enough, we get the desired result.
\end{proof}
Thus, focusing on $h=2$, we expect the contribution coming from the partition $\lambda_1=\lambda_2=1$ to be of order $N^2 \log^2 N$. Next, we prove that the contribution coming from other terms have strictly smaller order than this.
\begin{prop}
Let $a\geq b \geq -\frac{1}{2}$, $h \in \mathbb{N}$, $0<l<h<2a+2$, and  $\lambda_1+\ldots+\lambda_l=h$. Then;
\begin{equation*}
   \kappa_N(\lambda_1, \ldots, \lambda_l)\mathbb{E}_{N}^{\left(a,b\right)}\left[\prod_{j=1}^l x_j^{-\lambda_j/2}\left(1-x_j\right)^{\lambda_j/2}\right] = O \left(N^h \log^{h-2} N \right).
\end{equation*}
\label{prop2}
\end{prop}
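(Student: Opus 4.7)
The plan follows closely the strategy of Proposition 1: reduce the $l$-variable expectation to a product of single integrals against the diagonal of the correlation kernel, then estimate each factor separately, exploiting the hypothesis $l<h$ to produce at least one factor that has no logarithmic growth.

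\textbf{Step 1 (reduction to single integrals).} Using the expression for the $l$-point density from \eqref{eq:dens}, one may rewrite
\begin{equation*}
\kappa_N(\lambda_1,\ldots,\lambda_l)\mathbb{E}_{N}^{(a,b)}\left[\prod_{j=1}^l x_j^{-\lambda_j/2}(1-x_j)^{\lambda_j/2}\right] = \frac{1}{n_1!\cdots n_m!}\int_{[0,1]^l}\prod_{j=1}^l x_j^{-\lambda_j/2}(1-x_j)^{\lambda_j/2}\det\!\left[K_N^{(a,b)}(x_i,x_j)\right]_{i,j=1}^l d\mathbf{x}.
\end{equation*}
Expanding the determinant as a sum over permutations $\sigma \in S_l$ and applying the Cauchy-Schwarz bound $|K_N(x,y)|\leq\sqrt{K_N(x,x)K_N(y,y)}$ to each off-diagonal entry, every term is dominated by $\prod_j K_N(x_j,x_j)$. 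Hence the whole expression is bounded by a constant depending only on $l$ times the product
\begin{equation*}
\prod_{j=1}^l I_N(\lambda_j),\qquad I_N(\lambda):=\int_0^1 x^{-\lambda/2}(1-x)^{\lambda/2}K_N^{(a,b)}(x,x)\,dx.
\end{equation*}

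\textbf{Step 2 (estimating $I_N(\lambda)$).} For $\lambda=1$, this is exactly the integral treated in Proposition \ref{prop1} and is $O(N\log N)$. For $\lambda\geq 2$, I claim $I_N(\lambda)=O(N^\lambda)$. Split $[0,1]=[0,1/2]\cup[1/2,1]$; the second piece contributes $O(N)\leq O(N^\lambda)$ as in Proposition 1. For the first piece, substitute $x=u/N^2$ to obtain $N^\lambda$ times an integral over $u\in(0,N^2/2)$ of $u^{-\lambda/2}(1-u/N^2)^{\lambda/2}N^{-2}K_N(u/N^2,u/N^2)$. On $u\in(0,1)$, the Szegő bound $\sup_{x}P_j^2(x)\leq C j^{2a+1}$ gives $N^{-2}K_N(u/N^2,u/N^2)\leq C u^a$, and the resulting integral is finite because $\lambda/2<a+1$ (as $\lambda\leq h<2a+2$). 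On $u\in(1,N^2/2)$, the Erdélyi-type bound $w^{(a,b)}(x)P_j(x)^2\leq C/\sqrt{x(1-x)}$ gives $N^{-2}K_N(u/N^2,u/N^2)\leq C/\sqrt{u}$, leaving $\int_1^{N^2/2}u^{-\lambda/2-1/2}du$, which is $O(1)$ precisely because $\lambda\geq 2$.

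\textbf{Step 3 (combining the factors).} Let $r=|\{j:\lambda_j=1\}|$. Since $l<h$, the partition contains at least one part of size $\geq 2$, so $r\leq l-1\leq h-2$. Combining the two cases in Step 2,
\begin{equation*}
\prod_{j=1}^l I_N(\lambda_j) = O\!\left((N\log N)^r\right)\cdot \prod_{\lambda_j\geq 2}O(N^{\lambda_j}) = O\!\left(N^{r+\sum_{\lambda_j\geq 2}\lambda_j}\log^r N\right) = O(N^h\log^{h-2}N),
\end{equation*}
which is the desired bound.

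\textbf{Main obstacle.} The critical point is that $I_N(\lambda)=O(N^\lambda)$ \emph{without} an extra factor of $\log N$ when $\lambda\geq 2$; the hypothesis $\lambda\geq 2$ is used in exactly one place, namely the convergence of $\int_1^{N^2/2}u^{-\lambda/2-1/2}du$ under the Erdélyi bound. This is precisely the source of the logarithm in Proposition \ref{prop1} (where $\lambda=1$), so the qualitative gain of $\log^{h-2}$ versus $\log^{h}$ comes from trading each part $\geq 2$ for the absence of a logarithm, combined with the elementary counting bound $r\leq l-1\leq h-2$.
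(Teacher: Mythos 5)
Your proposal is correct and follows essentially the same route as the paper: reduce to a product of diagonal-kernel integrals via Cauchy–Schwarz, invoke the $O(N\log N)$ asymptotic from Proposition \ref{prop1} for parts $\lambda_j=1$, prove $I_N(\lambda)=O(N^\lambda)$ for $\lambda\geq 2$ by the same $x=u/N^2$ substitution and Szeg\H{o}/Erd\'elyi bounds, and combine. The one thing you spell out more explicitly than the paper is the counting argument $r\leq l-1\leq h-2$ that turns ``at least one $\lambda_j\geq 2$'' into the stated $\log^{h-2}N$, which is a welcome clarification.
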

\begin{proof}
As in the proof of Proposition \ref{pro1}, we write:
\begin{multline}
     \kappa_N(\lambda_1, \ldots, \lambda_l)\mathbb{E}_{N}^{\left(a,b\right)}\left[\prod_{j=1}^l x_j^{-\lambda_j/2}\left(1-x_j\right)^{\lambda_j/2}\right]\\ \leq  \text{const } \int_{[0,1]^l} \prod_{j=1}^l x_j^{-\lambda_j/2}\left(1-x_j\right)^{\lambda_j/2} \det\left[K_N^{(a,b)}(x_i,x_j)\right]_{i,j=1}^l dx_1 \ldots dx_l.
     \label{eq:smallint}
\end{multline}
Now, again arguing as in Proposition \ref{pro1} we can bound this integral by:
\begin{equation*}
    \text{const} \cdot \prod_{j=1}^l \int_0^1 x^{-\lambda_j/2}(1-x)^{\lambda_j/2} K_N^{\left(a,b\right)}(x,x) dx.
\end{equation*}
Further, arguing as in Proposition \ref{pro1}, we obtain the bound:
\begin{equation}
    \int_0^1 x^{-\lambda/2}(1-x)^{\lambda/2} K_N^{\left(a,b\right)}(x,x) dx = O\left(N^\lambda \right),
    \label{eq:momentsdct}
\end{equation}
for $\lambda\geq 2$. This proves the desired result since if $l<h$, there exists some $\lambda_j \geq 2$.
\end{proof}
Hence, for $h=2$, upon dividing by $N^2 \log^2 N$ and letting $N\to \infty$, the only (possibly) nonzero limiting contribution will come from the term:
\begin{multline*}
    \mathbb{E}_{N}^{\left(a,b\right)}\left[ x_1^{-1/2}x_2^{-1/2} \left(1-x_1\right)^{1/2}\left(1-x_2\right)^{1/2}\right] \\=\frac{1}{N(N-1)} \int_0^1 \int_0^1 \left[K_N^{\left(a,b\right)}(x,x)K_N^{\left(a,b\right)}(y,y)- \left(K_N^{\left(a,b\right)}(x,y)\right)^2\right]\\ x^{-1/2}y^{-1/2}\left(1-x\right)^{1/2}\left(1-y\right)^{1/2} dx dy.
\end{multline*}
We already know from Proposition \ref{prop1} that:
\begin{equation*}
    \int_0^1 \int_0^1 x^{-1/2}y^{-1/2}\left(1-x\right)^{1/2}\left(1-y\right)^{1/2} K_N^{\left(a,b\right)}(x,x)K_N^{\left(a,b\right)}(y,y)  dx dy \sim \frac{4}{\pi^2} N^2 \log^2(N).
\end{equation*}
Thus, we next prove that the other term gives a contribution of $0$ in the limit, which will give us the desired asymptotics.
\begin{prop}
Let $a\geq b \geq -\frac{1}{2}$, $a \geq 0$. Then,

\begin{equation}
    \lim_{N \to \infty}\frac{1}{N^2 \log^2 N}\int_0^{1} \int_0^{1} x^{-1/2}y^{-1/2}\left(1-x\right)^{1/2}\left(1-y\right)^{1/2} \left(K_N^{\left(a,b\right)}(x,y) \right)^2 dx dy = 0.
\end{equation}
 \label{prop3}
\end{prop}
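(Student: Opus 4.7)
The plan is to reduce the double integral to a one-dimensional integral of the diagonal kernel $K_N^{(a,b)}(x,x)$ by a Cauchy--Schwarz argument exploiting the reproducing property of the correlation kernel, and then to apply the estimate already established in the proof of Proposition \ref{pro1}.

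Writing $f(x) = x^{-1/2}(1-x)^{1/2}$ and factoring the integrand as $\bigl[f(x) K_N^{(a,b)}(x,y)\bigr]\bigl[f(y) K_N^{(a,b)}(x,y)\bigr]$, the Cauchy--Schwarz inequality on $L^2([0,1]^2)$, combined with the symmetry of the kernel and the orthonormality relation $\int_0^1 K_N^{(a,b)}(x,y)^2\, dy = K_N^{(a,b)}(x,x)$, yields
\begin{equation*}
    \int_0^1\int_0^1 f(x)f(y) K_N^{(a,b)}(x,y)^2\, dx\, dy \leq \int_0^1 f(x)^2 K_N^{(a,b)}(x,x)\, dx = \int_0^1 x^{-1}(1-x)\, K_N^{(a,b)}(x,x)\, dx.
\end{equation*}
This last quantity is in turn bounded by $\int_0^1 x^{-1} K_N^{(a,b)}(x,x)\, dx$, which is exactly the $\lambda = 1$ case of \eqref{eq:momentsdct}. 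Under the hypothesis $a>0$, the argument in the proof of Proposition \ref{pro1} gives $O(N^2)$ for this integral, so dividing by $N^2 \log^2 N$ and letting $N\to\infty$ produces a ratio tending to $0$, since $\log^{-2} N \to 0$.

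The main obstacle is really only bookkeeping. The Cauchy--Schwarz reduction collapses the two-variable problem entirely onto Proposition \ref{pro1} with $\lambda = 1$, which is admissible precisely when $a+1 > \lambda$, that is, $a>0$. In the borderline case $a=0$ allowed by the hypothesis, the factor $\int_0^1 u^{a-\lambda}\,du$ appearing in the proof of Proposition \ref{pro1} becomes logarithmically divergent, degrading the bound to $O(N^2 \log N)$; but this is still $o(N^2 \log^2 N)$, so the conclusion is unchanged. The substantive work has thus already been done in Proposition \ref{pro1}, and Cauchy--Schwarz here merely furnishes the reduction from two variables to one.
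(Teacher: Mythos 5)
Your Cauchy--Schwarz reduction is correct and, for $a>0$, yields a substantially shorter argument than the paper's. The paper instead expands the double integral as $\sum_{j,k} I^2(j,k)$, bounds the diagonal via its Proposition on $\mathcal{S}_N$, and for the off-diagonal terms uses the Frenzen--Wong Bessel approximation of the Jacobi polynomials together with the large-argument asymptotics of $J_a$, carefully splitting the sum into a region $j/k\geq 1+\delta$ (where oscillatory cancellation in $\int \cos((j-k)\theta)/\theta\,d\theta$ is exploited) and a near-diagonal region $1<j/k\leq 1+\delta$ (where a crude $|J_a(x)|\leq C/\sqrt{x}$ bound gives a contribution $\leq C\delta\,N^2\log^2 N$). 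This machinery is precisely what extracts cancellation in $K_N(x,y)$ for $x\neq y$; your Cauchy--Schwarz step discards all of it, trading cancellation for the reproducing identity $\int K_N(x,y)^2\,dy = K_N(x,x)$. That trade is advantageous when $a>0$ and collapses the proof onto the already-established one-dimensional estimate, and for the parameter ranges actually used in Theorem \ref{mainthm} one does have $a>0$.

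However, your treatment of the borderline case $a=0$ is wrong, and this is a genuine gap given the stated hypothesis $a\geq 0$. You claim that $\int_0^1 u^{a-\lambda}\,du$ ``becomes logarithmically divergent, degrading the bound to $O(N^2\log N)$,'' but for $a=0$, $\lambda=1$ the integral $\int_0^1 u^{-1}\,du$ is simply infinite: the bound is vacuous, not merely degraded. The underlying problem is not in the intermediate estimate but already in the object you reduce to: for $a=0$ the weight is $w^{(0,b)}(x)=(1-x)^b$, so $K_N^{(0,b)}(x,x)\to K_N^{(0,b)}(0,0)=\sum_j P_j^{(0,b)}(0)^2>0$ as $x\to 0$, and therefore $\int_0^1 x^{-1}(1-x)\,K_N^{(0,b)}(x,x)\,dx$ is divergent for every fixed $N$. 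Thus the Cauchy--Schwarz upper bound is $+\infty$ and provides no information, even though the left-hand double integral remains finite (the factor $x^{-1/2}y^{-1/2}$ is jointly integrable and $K_N^{(0,b)}$ is bounded near the origin). To cover $a=0$ you would need an argument that retains cancellation in the off-diagonal kernel, which is exactly what the paper's Bessel-asymptotics decomposition does. If you restrict the proposition to $a>0$ --- consistent with the main theorem's hypotheses and with the restriction $0<h<a+1$ already required in Proposition \ref{pro1} --- then your proof is complete and noticeably simpler than the original.
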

\begin{proof}
The main idea, again, will be to estimate this integral using Bessel functions. First, note that we can instead consider the integral over $\left[0,\frac{1}{2}\right] \times \left[0,\frac{1}{2}\right]$ since,
\begin{multline*}
    \int_0^{1} \int_{1/2}^{1} x^{-1/2}y^{-1/2}\left(1-x\right)^{1/2}\left(1-y\right)^{1/2} \left(K_N^{\left(a,b\right)}(x,y) \right)^2 dx dy \\ \leq \int_{0}^1 x^{-1/2}(1-x)^{1/2} K_N^{(a,b)}(x,x) dx \int_{1/2}^1 y^{-1/2}(1-y)^{1/2}K_N^{(a,b)}(y,y) dy \\\sim \mathbb{E}_N^{(a,b)}\left[x_1^{-1/2}(1-x_1)^{1/2} \mathbf{1}_{\{x_1\geq \frac{1}{2}\}}\right] N^2 \log N  = O(N^2 \log N),
\end{multline*}
and similarly we get a bound of $O(N^2)$ for the integral over $\left[\frac{1}{2},1\right]\times \left[\frac{1}{2},1\right]$. Now, we observe that,
\begin{equation*}
    \int_0^{1/2}\int_0^{1/2} x^{-1/2}y^{-1/2}\left(1-x\right)^{1/2}\left(1-y\right)^{1/2} \left(K_N^{\left(a,b\right)}(x,y) \right)^2 dx dy = \sum_{j,k=1}^N \left({I}(j,k)\right)^2,
\end{equation*}
where
\begin{equation*}
    I(j,k)= \int_0^{1/2} x^{-1/2}\left(1-x\right)^{1/2}w^{\left(a,b\right)}(x)  P_j^{\left(a,b\right)}(x) P_k^{\left(a,b\right)}(x) dx.
\end{equation*}
Since $I(j,k)$ is symmetric, we may write:
\begin{equation*}
    \sum_{j,k=1}^N I^2(j,k) = 2 \sum_{0\leq k<j \leq N} I^2(j,k) + \sum_{j=1}^N I^2(j,j).
\end{equation*}
Now, we bound the second term using Proposition \ref{prop1}:
\begin{equation*}
    \sum_{j=1}^N I^2(j,j) \leq \text{const }\sum_{j=1}^N \log^2(j) +1  = O\left(N \log^2 N\right).
\end{equation*}
Thus, it is sufficient to consider only the first sum. With this aim, fix some small $\delta\in (0,1)$. Divide the first sum into two sums:
\begin{equation*}
    \mathcal{T}_{N,\delta} + \mathcal{B}_{N,\delta}=\sum_{\substack{j,k=1 \\ \frac{j}{k}\geq 1+\delta}}^N I^2(j,k) + \sum_{\substack{j,k=1 \\ 1< \frac{j}{k}\leq 1+\delta}}^N  I^2(j,k).
\end{equation*}
$\textbf{Claim 1:}$ For fixed $\delta \in (0,1)$, 
\begin{equation*}
 \lim_{N \to \infty} \frac{\mathcal{T}_{N,\delta}}{N^2 \log^2 N} = 0. 
\end{equation*}

To obtain an upper bound on $I(j,k)$, we will again use the estimate \eqref{eq:wongest}. Note that if $\left\{a_l(x)\right\}_{l=1}^\infty$ and $\left\{b_l(x)\right\}_{l=1}^\infty$ are two sequences of functions on $[0,\frac{1}{2}]$ such that:
\begin{equation*}
    \lim_{l \to \infty} \sup_{x \in [0,\frac{1}{2}]}\left|a_l(x)-b_l(x)\right|=0,
\end{equation*}
then there exists an $M>0$ such that,
\begin{multline*}
    \left|a_j(x)a_k(x)-b_j(x)b_k(x)\right| \leq \left|a_j(x)\right| \left|a_k(x) -b_k(x)\right| +\left|b_k(x)\right|\left|a_j(x)-b_j(x)\right|\\ \leq \text{const}\left(\left|a_j(x)\right| +\left|b_k(x)\right|\right) \leq M\left(1 + \left|b_j(x)\right| +\left|b_k(x)\right|\right).
\end{multline*}
Thus, arguing as before, we obtain that there exists a constant $K>0$ such that:
\begin{equation*}
    \sup_{j,k} \left|I(j,k)-L(j,k)\right| \leq K,
\end{equation*}
where we define:
\begin{multline*}
    L(j,k)=\int_0^{1/2} \sqrt{h_j h_k}\frac{\arccos\left(1-2x\right)}{x}\\ \frac{J_{a}\left(\left(j+\frac{a+b+1}{2}\right)\arccos(1-2x)\right)
    J_{a}\left(\left(k+\frac{a+b+1}{2}\right)\arccos(1-2x)\right)}{\left(j+\frac{a+b+1}{2}\right)^{a}\left(k+\frac{a+b+1}{2}\right)^{a}} dx.
\end{multline*}
Thus, for all $j,k$, using the trivial bound $|x|\leq x^2+1$:
\begin{equation}
    I^2(j,k) \leq  C L^2(j,k) + \text{const}.
    \label{eq:ILcomparison}
\end{equation}
Hence, it is enough to show the claim for $I(j,k)$ replaced by $L(j,k)$. Note that making the substitution $\theta=\arccos(1-2x)$ in the definition of $L(j,k)$ gives:
\begin{multline*}
    L(j,k)= \int_0^{\pi/2} \frac{\sqrt{h_j h_k}}{\left(j+\frac{a+b+1}{2}\right)^{a}\left(k+\frac{a+b+1}{2}\right)^{a}} \frac{\theta \sin \theta}{1-\cos \theta} \\J_{a}\left(\left(j+\frac{a+b+1}{2}\right)\theta\right)
    J_{a}\left(\left(k+\frac{a+b+1}{2}\right)\theta\right) d\theta.
\end{multline*}
Now, we divide the integral into two contributions: $ (0,\frac{1}{k})$ and $(\frac{1}{k},\frac{\pi}{2})$. Note that since Bessel functions are bounded (by a constant depending on the parameter of the function), the contribution on $(0,\frac{1}{k})$ is bounded by:
\begin{equation*}
    \text{const} \cdot \frac{1}{k} \frac{\sqrt{h_j h_k}}{\left(j+\frac{a+b+1}{2}\right)^{a}\left(k+\frac{a+b+1}{2}\right)^{a}}  \leq \text{const}  \sqrt{\frac{j}{k}},
\end{equation*}
Now, to get asymptotics on the other contribution, we use \eqref{eq:besselasymptotics}. To be more precise, since on $\theta \in \left(\frac{1}{k},\frac{\pi}{2}\right)$; $\left(j+\frac{a+b+1}{2}\right)\theta $ and $\left(k+\frac{a+b+1}{2}\right)\theta$ are bounded away from zero uniformly in $j,k$; we see that there exists a bounded function $f : [1,\infty) \to \mathbb{R}$ such that:
\begin{multline*}
    J_{a}\left(\left(j+\frac{a+b+1}{2}\right)\theta\right)\\ = \sqrt{\frac{2}{\pi\left(j+\frac{a+b+1}{2}\right)\theta }} \cos\left(\left(j+\frac{a+b+1}{2}\right)\theta- \frac{\pi (a+\frac{1}{2})}{2}\right) + \frac{f\left(\left(j+\frac{a+b+1}{2}\right)\theta\right)}{\left(j+\frac{a+b+1}{2}\right)^\frac{3}{2}\theta^{\frac{3}{2}}},
\end{multline*}
and similarly with $j$ replaced by $k$.
Using this, we obtain that the contribution on $(\frac{1}{k}, \frac{\pi}{2})$ is given as (since the terms coming from the errors are integrable):
\begin{multline*}
    M(j,k):= \int_{1/k}^{\pi/2}g(\theta) \frac{2\sqrt{h_j h_k}}{\pi\left(j+\frac{a+b+1}{2}\right)^{(a+\frac{1}{2})}\left(k+\frac{a+b+1}{2}\right)^{(a+\frac{1}{2})}} \\ \frac{\cos\left(\left(j+\frac{a+b+1}{2}\right)\theta- \frac{\pi (a+\frac{1}{2})}{2}\right)\cos\left(\left(k+\frac{a+b+1}{2}\right)\theta- \frac{\pi (a+\frac{1}{2})}{2}\right)}{\theta} d\theta + O(1) \\=O(1)  \int_{1/k}^{\pi/2}g(\theta)  \frac{\cos\left(\left(j+\frac{a+b+1}{2}\right)\theta- \frac{\pi (a+\frac{1}{2})}{2}\right)\cos\left(\left(k+\frac{a+b+1}{2}\right)\theta- \frac{\pi (a+\frac{1}{2})}{2}\right)}{\theta} d\theta + O(1).
\end{multline*}
where we recall that we denote:
\begin{equation*}
    g(\theta)=\frac{\theta \sin \theta}{1-\cos \theta}.
\end{equation*}
Now, we divide the integral into two integrals via the identity
\begin{equation}
    \cos(a)\cos(b) =\frac{1}{2}\left(\cos(a-b)+\cos(a+b)\right).
    \label{eq:cosidentity}
\end{equation}
Then, the first integral becomes (up to multiplication by a bounded quantity):
\begin{equation*}
    \int_{1/k}^{\pi/2} \frac{g(\theta)\cos \left((j-k)\theta\right)}{\theta} d\theta = \int_{1}^{\frac{\pi k}{2}} \frac{\cos\left(\left(\frac{j}{k}-1\right)u\right) g\left(\frac{u}{k}\right)}{u} du,
\end{equation*}
which can be seen to be bounded for all $j,k$ such that $\frac{j}{k}-1\geq \delta>0$, using integration by parts. As for the second integral that comes from \eqref{eq:cosidentity},
\begin{multline*}
     \int_{1/k}^{\pi/2} \frac{g(\theta)\cos \left((j+k+a+b+1)\theta-\pi (a+\frac{1}{2})\right)}{\theta} d\theta \\ = \int_1^{\frac{\pi k}{2}} \frac{g\left(\frac{u}{k}\right)\cos \left(\left(\frac{j+a+b+1}{k}+1\right)u-\pi (a+\frac{1}{2})\right)}{u} du = O(1),
\end{multline*}
which can be seen again via integrating by parts. In particular, using the trivial identity
$(x+y)^2 \leq 2(x^2+y^2)$, there exists a constant $C>0$ such that for all $j,k$ such that $\frac{j}{k}\geq 1+\delta$:
\begin{equation*}
    L^2(j,k) \leq C \frac{j}{k}.
\end{equation*}
Noting that
\begin{equation*}
    \sum_{\substack{j,k=1 \\ \frac{j}{k}\geq 1+\delta}}^N \frac{j}{k} \leq \left(\sum_{k=1}^N\frac{1}{k}\right)\left(\sum_{j=1}^N j\right) = O( N^2 \log(N)),
\end{equation*}
the claim follows.\\
$\textbf{Claim 2}:$ There exists a constant $C>0$ independent of $\delta$ such that 
\begin{equation*}
  \limsup_{N \to \infty} \frac{\mathcal{B}_{N,\delta}}{N^2 \log^2 N}\leq C \delta.
\end{equation*}
Note that once again, it is sufficient to prove the claim for $I(j,k)$ replaced by $L(j,k)$. To prove this bound, we start again by dividing the integral  defining $L(j,k)$ into two contributions: $\left(0,\frac{1}{k}\right)$ and $\left(\frac{1}{k},\frac{\pi}{2}\right)$. Exactly as before, we get the bound,
\begin{equation*}
    C \sqrt{\frac{j}{k}} \leq C \sqrt{1+\delta} \leq 2C,
\end{equation*}
on the integral over $\left(0,\frac{1}{k}\right)$, where $C$ only depends on the parameter of the Bessel function and hence does not depend on $\delta$. As for the other integral, we use the estimate,
\begin{equation*}
   \left|J_a(x)\right| \leq \frac{C}{\sqrt{x}},
\end{equation*}
to show that the integral over $\left(\frac{1}{k},\frac{\pi}{2}\right)$ is bounded by 
\begin{equation*}
    \text{const} \int_{1/k}^{\pi/2} \frac{1}{u} du = O(\log k ).
\end{equation*}
Thus, we get a constant $C>0$ independent of $\delta$ such that
\begin{equation*}
   \mathcal{B}_{N,\delta} \leq C \left( \sum_{\substack{j,k=1 \\ 1< \frac{j}{k}\leq 1+\delta}}^N  \log^2(k) + 1  \right).
\end{equation*}
Now, note that for each $k$, $\log^2(k)+1$ appears as many times as the number of integers $j\leq N$ such that $k < j \leq k(1+\delta)$. But this is at most $\delta k $, and hence,
\begin{equation*}
    \sum_{\substack{j,k=1 \\ 1< \frac{j}{k}\leq 1+\delta}}^N  \log^2(k) +1 \leq \delta \sum_{k=1}^N k (\log^2 (k)+1) \leq \delta N^2 \log^2 N + \delta N^2,
\end{equation*}
which proves the claim.
Combining the two claims, we see that:
\begin{equation*}
    \limsup_{N\to \infty} \frac{1}{N^2\log^2 N} \sum_{j,k=1}^N I^2(j,k) =  \limsup_{N \to \infty} \left(\mathcal{T}_{N,\delta}+\mathcal{B}_{N,\delta}\right) \leq \limsup_{N \to \infty} \mathcal{T}_{N,\delta}+ \limsup_{N\to \infty}\mathcal{B}_{N,\delta}\leq C \delta.
\end{equation*}
Since $C$ is independent of $\delta$ and $\delta \in (0,1)$ was arbitrary the result follows.
\end{proof}
\begin{proof}[Proof of Theorem \ref{mainthm}]
Combining Propositions \ref{prop1}, \ref{prop2}, \ref{prop3}, one can see that for $h=1,2$,
\begin{equation*}
    \lim_{N \to \infty} \mathbb{E}_{N}^{\left(a,b\right)} \left[\left(\mathcal{Z}_N\right)^h\right] = \left(\frac{2}{\pi}\right)^h.
\end{equation*}
In particular:
\begin{equation*}
     \lim_{N \to \infty} \mathbb{E}_{N}^{\left(a,b\right)} \left[\left(\mathcal{Z}_N-\frac{2}{\pi}\right)^2\right] =\lim_{N \to \infty}\left[ \mathbb{E}_{N}^{\left(a,b\right)} \left[\left(\mathcal{Z}_N\right)^2\right] - \frac{4}{\pi} \mathbb{E}_{N}^{\left(a,b\right)} \left[\mathcal{Z}_N\right] + \frac{4}{\pi^2}\right]=0.
\end{equation*}
Hence, we have that $\mathcal{Z}_N$ converges, in distribution, to $\frac{2}{\pi}$, and hence by Skorokhod's Representation Theorem we may assume without loss of generality that the convergence is in almost sure sense. Moreover, expanding as in \eqref{eq:combsum5} and applying Propositions $\ref{prop1}, \ref{prop2}$ and $\ref{prop3}$, one can infer,
\begin{equation*}
    \sup_{N \geq 1} \left\{\mathbb{E}_{N}^{\left(a,b\right)} \left[\left(\mathcal{Z}_N\right)^
{\alpha(2a+2)}\right] \right\}< \infty,
\end{equation*}
which means the sequence of random variables
\begin{equation*}
    \left\{\left(\mathcal{Z}_N\right)^h\right\}_{N \geq 1},
\end{equation*}
sampled according to $\mu_N^{(a,b)}$
are uniformly integrable. Combining the uniform integrability of this sequence with the assumed almost sure convergence, we get the desired result.
\end{proof}

\bibliographystyle{siam}
\bibliography{references.bib}

 \noindent
{\sc  Fine Hall, 304 Washington Rd, Princeton, NJ 08544, USA.}\newline
\href{mailto:magunes@princeton.edu}{\small magunes@princeton.edu}
\end{document}